\newtheorem{lemma}{LEMMA}[section]
\newtheorem{proposition}[lemma]{PROPOSITION}
\newtheorem{corollary}[lemma]{COROLLARY}
\newtheorem{theorem}[lemma]{THEOREM}
\newtheorem{remark}[lemma]{REMARK}
\newtheorem{remarks}[lemma]{REMARKS}
\newtheorem{examples}[lemma]{EXAMPLES}
\newtheorem{assumption}[lemma]{ASSUMPTION}
\newcommand{\real}{\mathbbm{R}}
\newcommand{\nat}{\mathbbm{N}}
\newcommand{\ganz}{\mathbbm{Z}}
\renewcommand{\a}{\alpha}
\renewcommand{\b}{\beta}
\newcommand{\g}{\gamma}
\newcommand{\ve}{\varepsilon}
\newcommand{\reald}{{\real^d}}
\newcommand{\on}{\quad\text{ on }}
\newcommand{\und}{\quad\mbox{ and }\quad}
\newcommand{\inv}{^{-1}}
\newcommand{\ov}{\overline}
\newcommand{\W}{\mathcal W}  
\newcommand{\C}{\mathcal C}
\renewcommand{\H}{{\mathcal H}}
\newcommand{\B}{\mathcal B}
\newcommand{\M}{\mathcal M}
\newcommand{\itemframe}%
{\setlength{\parskip}{10pt}\begin{enumerate} \setlength{\topsep}{10pt}%
\setlength{\itemsep}{15pt}\setlength{\parsep}{5pt}}
\newcommand{\vy}{\ve_y}
\newcommand{\vx}{\ve_x}
\newcommand{\Px}{\mathcal P} 
\newcommand{\uc}{{U^c}}
\newcommand{\vc}{{V^c}}
\newcommand{\kap}{\operatorname*{cap}}
\newcommand{\tkap}{\widetilde{\operatorname*{cap}}}
\newcommand{\kapi}{\operatorname*{{cap_\ast}}}
\newcommand{\kapo}{\operatorname*{{cap^\ast}}}
\newcommand{\es}{E_{\mathbbm P}}
\title{Harnack inequalities for Hunt processes\\ with  Green
  function\\
}
\author{Wolfhard Hansen and Ivan Netuka}
\date{}
\begin{document}
\maketitle

\begin{abstract}
Let $(X,\mathcal W)$ be a balayage space, $1\in \mathcal W$, or -- equivalently -- let $\mathcal W$
be the set of excessive functions of a Hunt process on a locally compact space~$X$ with countable base such that
$\mathcal W$  separates points, every function in $\mathcal W$ is the supremum of its
continuous minorants and there exist  strictly positive continuous       $u,v\in \mathcal W$ 
such that $u/v\to 0$  at infinity. We suppose that there is a~Green function 
$G>0$ for $X$, a metric $\rho$ on $X$ and a   decreasing function $g\colon[0,\infty)\to (0,\infty]$ 
having the doubling property and a mild upper decay  near~$0$  such that $G\approx g\circ\rho$
(which is equivalent to a $3G$-inequality).

Then the corresponding capacity for balls of radius $r$ 
 is bounded by a~constant multiple of 
$1/g(r)$. Assuming that reverse inequalities hold as well and that
jumps of the process, when starting at neighboring points, are related
in a suitable way, it is proven that positive harmonic functions satisfy
scaling invariant Harnack inequalities. Provided that the
Ikeda-Watanabe formula holds, sufficient conditions for this relation
are given. This shows that rather general L\'evy processes are covered
by this approach.

 {
 Keywords:  Harnack inequality; Hunt process; balayage space; L\'evy process;   Green
 function; 3G-property; equilibrium potential; capacity.

  MSC:     31B15, 31C15, 31D05, 60J25, 60J45, 60J65, 60J75.}
\end{abstract}

\section{Setting and main result}
Our basic setting will be almost as in \cite{H-liouville-wiener}, but assuming that points 
are polar:

Let $X$ be a locally compact space with countable base.
Let $\C(X)$ denote the set of all continuous real functions on $X$
and let $\B(X)$ be the set of all Borel measurable numerical functions on $X$.
The set of all (positive) Radon measures on $X$ will be denoted by $\M(X)$.

Moreover, let $\W$ be a convex cone of positive lower 
semicontinuous numerical functions on~$X$ such that $1\in \W$ and $(X,\W)$ is a balayage space
(see \cite{BH}, \cite{H-course}  or \cite[Appendix]{HN-unavoidable}). In particular, the following holds: 
 
\begin{itemize} 
\item[\rm (C)]
 $\W$ separates the points of $X$, for every $w\in \W$,
\[
              w=\sup\{v\in\W\cap \C(X)\colon v\le w\}, 
\]
and there are strictly positive $u,v\in\W\cap \C(X)$ such that $u/v\to 0$  at~infinity.   
\end{itemize} 
Then there exists a Hunt process $\mathfrak X$ 
on $X$ such that $\W$ is the set $\es$ of excessive functions for the transition semigroup 
$\mathbbm P=(P_t)_{t>0}$ of $\mathfrak X$ (see \cite[IV.7.6]{BH}), that is,
\[
             \W=\{v\in \B^+(X)\colon \sup\nolimits_{t>0} P_tv=v\}.
\]

We note that, conversely, given any sub-Markov right-continuous semigroup $\mathbbm P=(P_t)_{t>0}$  on $X$
such that (C) is satisfied by its convex cone $\es$ of excessive functions, $(X,\es)$ is a~balayage space, 
and $\mathbbm P$ is the transition semigroup of a Hunt process (see  \cite[Corollary 2.3.8]{H-course}
or \cite[Corollary A.5]{HN-unavoidable}).                 
 
For every subset $A$ of $X$, we have reduced functions  $R_u^A$, $u\in \W$, and reduced measures
$\vx^A$, $x\in X$, defined by
\[
          R_u^A:=\inf\{ v\in \W\colon v\ge u\mbox{ on } A\} \und  \int u\,d\vx^A=R_u^A(x).
\]
Of course,  $R_u^A\le u$ on $X$ and $R_u^A=u$ on $A$.  The greatest lower semicontinuous minorant~$\hat R_1^A$  of $R_1^A$
(which is also the greatest finely lower semicontinuous minorant of~$R_1^A$) is contained 
in~$\W$, and  $\hat R_1^A=R_1^A$ on~$A^c$ (see \cite[VI.2.3]{BH}). If $A$ is not thin at any of its points
 (see \cite[VI.4]{BH})  for the definition), in particular, if $A$ is  open, then $R_u^A\in \W$.
If $A$ is  Borel measurable, then  
\begin{equation}\label{connection}
               R_1^A(x)= P^x[T_A<\infty], \qquad x\in X,
\end{equation} 
where  $T_A(\omega):=\inf \{t\ge 0\colon X_t(\omega)\in A\}$
 (see \cite[VI.3.14]{BH}) and, for every Borel measurable set $B$ in
 $X$, 
\[
    \vx^A(B)=P^x[X_{T_A}\in B; T_A<\infty].
\]

For every open set $U$ in $X$, let $\H^+(U)$ denote the set of all 
functions $h\in \B^+(X)$ which are \emph{harmonic on~$U$} (in the sense of \cite{BH}), that is,
such that $h|_U\in \C(U) $  and
\begin{equation}\label{mv}
    H_V h(x):=\vx^\vc(h):=\int h\,d\vx^\vc=h(x) \qquad
\mbox{ if  }  V\mbox{ is open and } x\in V\subset
\subset U.
\end{equation} 
If, for example, $A$ is a Borel measurable set in $X$ and $u\in \W$,
then,  by \cite[VI.2.6]{BH},   
\begin{equation}\label{A-harmonic}
     R_u^A\in \H^+(X\setminus \ov A) \qquad \mbox{ provided }  u\le w\mbox{ for some } w\in \W\cap \C(X).
\end{equation}

We note that $U\mapsto \H^+(U)$ has the following  sheaf property: If $U_i$, $i\in I$, are open sets in $X$, then
\[ 
                             \bigcap\nolimits_{i\in I}  \H^+(U_i)= \H^+\bigl(\bigcup\nolimits_{i\in I} U_i\bigr).
\]
In fact, given an open set $U$ in $X$, a function $h\in \B^+(X)$ which is continuous on $U$ is already contained
in $\H^+(U)$, if, for every $x\in U$, there exists a fundamental system of relatively compact open neighborhoods $V$ of $x$ 
in $U$ such that $\vx^\vc(h)=h(x)$ (see \cite[III.4.4 and III.4.5]{BH} or \cite[Corollary 5.2.8 and Corollary 5.2.9]{H-course}).

Moreover, let $\tilde \H^+(U)$ be the set of all $h\in\B^+(X)$ such
that $h<\infty$ on $U$ and (\ref{mv}) holds.  Then every function in $\tilde \H^+(U)$ is 
 lower semicontinuous on $U$, and 
\begin{equation}\label{hht}
         \tilde \H_b^+(U)=\H_b^+(U).
\end{equation} 
Indeed, let $V$ be an open set such that $V\subset \subset U$. By (\ref{A-harmonic}), 
$H_V1=R_1^\vc$ is harmonic on~$V$.  
 Moreover, for every $f\in \B_b^+(X)$ with compact support, 
the function~$H_Vf$ is continuous on $V$ (see \cite[III.2.8]{BH}). So, for every $f\in \B^+(X)$,
the function $H_V f$ is lower semicontinuous. Assuming that $f$ is bounded, 
both $H_Vf$ and $H_V(\|f\|-f)$ are lower semicontinuous on $V$, and hence (due to the continuity of the sum)
both are continuous on $V$.

For our main result we shall assume     
that we have a metric $\rho$ for $X$, a~Green function $G$ on $X$, a decreasing function $g$ on $[0,\infty]$,
 and $0<R_0\le \infty$ such that 
\[
G\approx g\circ \rho,
\] 
 $g$ having the  doubling property and  weak upper decay on  $(0,R_0)$ (Assumption~\ref{main-ass}).
Defining balls $B(x,r):=\{y\in X\colon \rho(y,x)<r\}$,  $x\in X$, $0<r<R_0$, we suppose that, for some $c_0\ge 1$, 
the corresponding capacities satisfy
\begin{equation}\label{ball-cap}
 \kap B(x,r)\ge c_0\inv  g(r)\inv
\end{equation} 
(Assumption \ref{ex-c0}; a reverse estimate is a consequence of the previous assumption).
Finally, we shall suppose (Assumption \ref{final-ass}) that there are constants $0<R_1\le \infty$, $c_J\ge 1$ and $0<\a_0<1$ such that,
 for all $x\in X$, $0<r<R_1$ and $y\in B(x,\a_0 r)$, 
\begin{equation}\label{basic-jump}
 \vx^{B(x,\a_0 r)^c} \le c_J\,\vy^{B(x,r)^c} \on    B(x,r)^c . 
\end{equation}

Then our main result is the following (see also Remarks \ref{Green-first},6 and \ref{ess-local}).    

\begin{theorem}\label{harnack} 
\begin{itemize}
\item[\rm (1)] 
For every open set $U$ in $X$, $\tilde \H^+(U)=\H^+(U)$.
\item[\rm (2)] 
\emph{Scaling invariant Harnack inequalities}:   
There exists constants~$ \a\in (0,1)$ and $K\in (1,\infty) $    
such that the following holds: For all  $x_0\in X$,
$0<R<R_0\wedge R_1$ 
such that $\ov B(x_0,R)$ is a~proper compact subset of $X$, and all  $h\in \B^+(X)$
which are harmonic in a neighborhood of $\ov B(x_0,R)$,
\begin{equation}\label{har}
     \sup h(B(x_0,\a R))\le K \inf h(B(x_0,\a R)).
\end{equation} 
\end{itemize}
\end{theorem}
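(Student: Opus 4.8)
The plan is to derive the Harnack inequality from the capacitary estimates on balls together with the jump-comparison hypothesis~(\ref{basic-jump}), following the classical scheme that splits a positive harmonic function into a "regular" part coming from far away and a "singular" part coming from the complement of a large ball. First I would fix $x_0\in X$ and $0<R<R_0\wedge R_1$ with $\ov B(x_0,R)$ a proper compact subset, and write balls $B_s:=B(x_0,sR)$ for the relevant dilation factors. Given $h\in\B^+(X)$ harmonic in a neighborhood of $\ov B(x_0,R)$, the key decomposition is
\[
   h = H_{B_1} h = \int_{B_1^c} h\,d\ovx^{B_1^c},
\]
valid on $B_1$ by the mean-value property~(\ref{mv}), since $h$ is harmonic past $\ov B_1$. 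One then wants to show that for two points $x,y\in B_\a$ (with $\a$ small, to be chosen) the measures $\ovx^{B_1^c}$ and $\onu_y^{B_1^c}$ are comparable up to a multiplicative constant, uniformly in $x_0,R,h$; integrating $h$ against comparable measures yields (\ref{har}) with $K$ the comparison constant. Part~(1) of the theorem, $\tilde\H^+(U)=\H^+(U)$, follows from~(2) by a standard argument: a function $h\in\tilde\H^+(U)$ is lower semicontinuous on $U$ by the discussion preceding~(\ref{hht}), and the Harnack inequality applied on small balls forces it to be locally bounded, hence (by~(\ref{hht}) in the bounded case, or rather by the same continuity argument applied locally) continuous on $U$, so $h\in\H^+(U)$.

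The comparison of harmonic measures is the heart of the matter, and here the two hypotheses enter in complementary ways. The jump part is handled directly by~(\ref{basic-jump}): taking $r\approx R$ there and $\a_0$ as in that assumption, for $y\in B(x_0,\a R)\subset B(x_0,\a_0 r)$ one gets $\ovx^{B(x_0,\a_0 r)^c}\le c_J\,\onu_y^{B(x_0,r)^c}$ on $B(x_0,r)^c$, which controls the contribution to $H_{B_1}h(x)$ coming from points that the process reaches by a genuine jump out of a ball of radius comparable to $R$. The part of the exit distribution that lands \emph{near} the boundary sphere — where the process creeps out through a thin annular shell rather than jumping far — must be controlled by a different mechanism, namely by showing that the process started at any point of $B(x_0,\a R)$ spends, with probability bounded below, enough "time/mass" in the bulk of $B_1$ before exiting, so that the near-boundary exit probability is uniformly small; this is exactly where the capacity lower bound~(\ref{ball-cap}) and its reverse (the upper bound on $\kap B(x,r)$, which the paper notes is a consequence of the doubling/decay assumption on $g$) are used, via estimates of the type $P^x[T_{A}<\tau_{B_1}]\approx \kap(A)\cdot G(\text{scale})$ for $A$ a small ball deep inside. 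Concretely I would prove two lemmas: (i) an oscillation/barrier estimate saying $\ovx^{B_1^c}(B_{1}\setminus B_{1-\eta})$ is small when $\eta$ is small, uniformly in $x\in B_\a$; and (ii) a Harnack-type chaining or a direct comparison showing $\ovx^{B_1^c}\restriction B_{1/2}^c \cap (\text{bulk annulus})$ is comparable for $x,y\in B_\a$, using the Green-function estimate $G\approx g\circ\rho$ to write the relevant kernels explicitly.

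The main obstacle I anticipate is precisely lemma~(i) — controlling the "boundary layer" contribution to the exit measure — because unlike in the diffusion case there is no exterior cone or smoothness of the boundary to exploit, and the process can in principle exit $B_1$ by landing arbitrarily close to $\partial B_1$. The way through is to exhaust $B_1\setminus B_{1-\eta}$ by a controlled number (depending on $\eta$ and the doubling constant, but not on $R$) of balls of radius $\sim\eta R$, bound the probability of the process reaching each such ball before exiting $B_{1}$ using the capacity upper bound together with $G\approx g\circ\rho$ and the doubling property of $g$, and sum; the weak upper decay of $g$ near $0$ is what keeps the total estimate from blowing up as $\eta\to0$. Once~(i) and~(ii) are in hand, one fixes $\eta$ (hence the geometry) and then $\a$ small enough that the bulk-comparison constant in~(ii) absorbs everything, and reads off $K$ and $\a$; these depend only on $c_0$, $c_J$, $\a_0$, the doubling constant of $g$ and the decay exponent, as required for scaling invariance. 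A final routine point is to pass from balls of radius exactly $R$ to "a neighborhood of $\ov B(x_0,R)$" and to deal with the harmonicity of $h$ only on that neighborhood, which is handled by~(\ref{A-harmonic}) and the sheaf property of $U\mapsto\H^+(U)$ recorded in the excerpt.
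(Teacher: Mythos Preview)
Your approach is genuinely different from the paper's, and as stated it has a real gap.

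The paper does \emph{not} compare the exit measures $\ve_x^{B_1^c}$ and $\ve_y^{B_1^c}$ directly. It follows the Bass--Levin scheme: one shows (Lemma~\ref{main-lemma}) that if $h\in\H_b^+(B(x_0,R))$ with $h(y_0)=1$, and $h(x)>(1+\b)^{j-1}K$ at some $x$ in the small concentric ball, then there is a nearby $x'$ with $h(x')>(1+\b)^jK$; iterating produces points along which $h\to\infty$, contradicting boundedness (Lemma~\ref{sum-rj} keeps all iterates inside $B(x_0,2\a^2 R)$). The engine of Lemma~\ref{main-lemma} is a capacity dichotomy on the level sets $U_1=\{h>\g h(x)\}$ and $U_2=\{h<2\g h(x)\}$ inside a ball of carefully chosen radius $r_j$: if $\kap U_1$ were large, the hitting estimate (Proposition~\ref{hit-A}) would force $h(y_0)$ to be too big; hence $\kap U_2$ is large, and then the jump comparison~(\ref{jump-1}) is invoked \emph{once}, not to compare $\ve_x$ with $\ve_y$, but to show that the far-field term $\int_{B(x,r/\a^2)^c}h\,d\nu$ is at most $\b h(x)$. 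Bookkeeping then gives $\sup h(B(x,r/\a^2))\ge(1+\b)h(x)$.

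Your plan runs into a circularity that the growth-lemma method is designed to break. If you write $h(x)=\ve_x^{B_\a^c}(h)$ and split off the part landing in $B_1^c$ (which~(\ref{basic-jump}) does control), the remaining integral is over the annulus $B_1\setminus B_\a$, and bounding it requires control of $h$ there --- exactly the Harnack inequality you are proving. Your Lemma~(i) does not resolve this: as written, $B_1\setminus B_{1-\eta}$ lies inside $B_1$, so $\ve_x^{B_1^c}$ gives it zero mass; and even reinterpreted as an exterior shell, the proposed covering by balls of radius $\sim\eta R$ presupposes control on the \emph{number} of such balls, i.e.\ a doubling volume measure on $X$, which is explicitly not assumed here (the paper emphasizes that, ``not assuming the existence of a volume measure \dots\ we shall rely entirely on capacities of sets''). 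Without that, you have no bound on the number of covering balls and the sum of hitting probabilities need not be small. The harmonic-measure comparison route cannot be closed with the stated hypotheses; the iteration/contradiction argument is what makes the proof go through.

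Your sketch for deducing (1) from (2) is essentially correct; the paper carries it out by applying~(\ref{har}) to $h-h_n$ with $h_n=H_W(h\wedge n)$, obtaining uniform convergence of the continuous $h_n$ to $h$ on small balls.
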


In Section 2  we shall shortly discuss a Green function  for $(X,\W)$ and the related capacity.
In Section 3, the probability of  hitting a~subset $A$ of a  ball before leaving a~much larger ball is estimated
in terms of the capacity of~$A$. And in Section~4 we repeat basic facts on the relation between   
a lower estimate of the equilibrium potential of a ball by the Green function and (\ref{ball-cap}).

Having prepared it by two crucial lemmas in Section 5,
the proof of Theorem~\ref{harnack} is given in Section~6. 
Sufficient conditions for the validity of (\ref{basic-jump})  are discussed in  Section 7. 
In a final Section~8, we prove Harnack inequalities
under intrinsic and local assumptions.

\section{Green function and capacity}\label{green-function}

By definition, a \emph{potential on $X$} is a function $p\in \W$ such that, for every relatively compact open 
set $V$ in $X$,
the function $H_Vp=R_p^\vc$ is continuous and real on $V$ (and hence harmonic on $V$) and 
\[
                       \inf \{ R_p^\vc\colon V\mbox{ relatively compact open in }X\}=0.
\]
By \cite[Proposition 4.2.10]{H-course}, a function $p\in \W\cap \C(X)$ 
 is a potential if and only if there exists a strictly positive $q\in \W\cap \C(X)$
such that $p/q$ vanishes at infinity.  Let~$\Px$ denote the set of all continuous real potentials on $X$.

Unless stated otherwise 
 we assume from now on the following:

\begin{assumption}\label{main-ass}
There exists a Borel measurable function $G\colon X\times X\to (0,\infty]$ such that 
$G=\infty$ on the diagonal 
and the following holds:
\begin{itemize} 
\item [\rm (i)] 
   For every $y\in X$,  $ G(\cdot,y)$ is a potential which is harmonic on $X\setminus \{y\}$.   
\item [\rm (ii)] 
For every potential $p$ on $X$, there exists a~measure $\mu$ on $X$    such that 
\begin{equation}\label{p-rep}
p=G\mu:=\int G(\cdot,y)\,d\mu(y).
\end{equation} 
 \item [\rm (iii)]
There exist a metric $\rho$ for~$X$  {\rm(}compatible with the
topology of~$X${\rm)}, a 
decreasing numerical function $g>0$ on $[0,\infty)$, and constants $c\ge 1$, $c_D>1$, $0<\a_0<1$, $0<\eta_0<1$, 
$0<R_0\le \infty$ such that  
\begin{equation}\label{Gg}
c\inv g\circ \rho\,  \le\,  G \, \le\,  c\, g\circ \rho,
\end{equation} 
and, for every $0<r<R_0$, 
\begin{equation}\label{doubling}
g(r/2)\le c_D g(r) \und                   g(r)\le \eta_0 g(\a_0 r).
\end{equation} 
\end{itemize} 
\end{assumption}

\begin{remarks}\label{Green-first}
{\rm
1. Having (i), each of the following properties implies (ii).
\begin{itemize}
\item
$G$ is lower semicontinuous on $X\times X$, continuous outside the diagonal,
the potential kernel $V_0:=\int_0^\infty P_t\,dt$ of $\mathfrak X$ is 
proper, and there is a measure~$\mu$ on $X$ such that $V_0f =\int G(\cdot, y)f(y)\,d\mu(y)$, $f\in\B^+(X)$
(see \cite{maagli-87}  and \cite[III.6.6]{BH}).

\item
$G$ is locally bounded off the diagonal, each function $G(x,\cdot)$ is lower semicontinuous 
on $X$ and continuous on $X\setminus \{x\}$, and there exists a measure $\nu$ on $X$ such that $G\nu\in \C(X)$ 
and $\nu(U)>0$, for every  finely open  $U\ne \emptyset$ (the latter holds, for example, if 
$V_0(x,\cdot)\ll \nu$, $x\in X$). See \cite[Theorem 4.1]{HN-representation}.
\end{itemize} 

2. 
The measure in (\ref{p-rep}) is uniquely determined and, given any measure~$\mu$
on~$X$ such that $p:=G\mu$ is a potential, the complement of the support of $\mu$ is the largest open
set, where $p$ is harmonic (see, for example, \cite[Proposition 5.2 and Lemma~2.1]{HN-representation}).

3. For the special case $X=\reald$ with $\rho(x,y)=|x-y|$ and isotropic unimodular Green function,
covering rather general L\'evy processes, see \cite[Section 6]{HN-unavoidable} and \cite{H-fuku}.

4. Of course, (\ref{doubling}) implies that, for any $\eta>0$, there exists $\a\in (0,1)$ such that 
$g(r)\le \eta g(\a r)$ for every $0<r<R_0$ (it suffices to choose $m\in\nat$ such that $\eta_0^m\le \eta$ 
and to take $\a:=\a_0^m$). 
Moreover, we see that $\lim_{r\to 0} g(r)=\infty$.

5. We may (and shall)  
assume without loss of generality that $g$ is continuous on $(0,R_0)$.  
 Indeed, if $R_0=\infty$, let $Z:=\{2^n\colon n\in\ganz\}$ and let $\tilde g$ be the continuous function 
on $(0,\infty)$ such that  $\tilde g=g$ on $Z$ and $\tilde g$ is locally affinely linear on $(0,\infty)\setminus Z$. 
Then $g$ is decreasing, and it
is easily verified that $c_D\inv g\le \tilde g\le c_D g$.

 If $R_0<\infty$, we may proceed similarly using 
$Z:=\{2^{-n}R_0\colon  n\in \nat\}$ and defining $\tilde g(R_0):=\lim_{s\to R_0-} g(s)$.

6. Let us mention that it is rather easy and straightforward to show the following.     
If all balls are relatively compact and the doubling property $g(r/2)\le c_D g(r)$ holds for all
$0<r<\infty$, then 
\[
  \sup v(B(x_0,R/2))\le (cc_D)^2 \inf v(B(x_0,R/2)) 
\] 
for all $v\in \W$, $x_0\in X$ and $R>0$ such that $v$  is contained in $\tilde \H^+(U)$ for some open
neighborhood $U$ of $\ov B(x_0,R)$. 
If, in addition, the   function $1$ is harmonic on $X$, then
every function in $\tilde \H^+(X)$ is constant (Liouville property; see \cite[Section 2]{H-liouville-wiener}).
}
\end{remarks}

Suppose that $A$ is a subset of  $X$ such that $\hat R_1^A$ is a potential. 
Then there is a~unique measure~$\mu_A$ on $X$, 
the \emph{equilibrium measure for $A$}, such that 
\[
               \hat R_1^A=G\mu_A.
\]           
If $A$ is open, then $\hat R_1^A=R_1^A\in \H(X\setminus \ov A)$
and  $\mu_A$ is supported by~$\ov A$. For a general balayage space this may already fail if $A$ is compact (see \cite[V.9.1]{BH}).

We  define inner capacities for open sets $U$ in $X$ by 
\begin{equation}\label{inner-cap}
 \kapi U:=\sup\bigl\{\|\mu\|\colon  \mu\in\M(X), \ \mu(X\setminus U)=0,\ G\mu\le 1\bigr\}
\end{equation} 
and outer capacities for arbitrary sets $A$ in $X$ by
\begin{equation}\label{outer-cap}
 \kapo A:=\inf\bigl\{ \kapi U\colon U\mbox{ open neighborhood of } A\bigr\}.
\end{equation} 
Obviously, $\kapo A=\kapi A$, if $A$ is open. If $\kapi A=\kapo A$, we might simply write $\kap A$
and speak of the capacity of $A$. It is easily seen that $U\mapsto \kap U$ is subadditive and $\kap U_n\uparrow \kap U$,  
for any sequence $(U_n)$ of open sets in $X$ with $U_n\uparrow U$.

The capacity of open sets $U$ is essentially determined by the total mass of
equilibrium measures for  relatively compact open sets in $U$ (see \cite[Lemma 1.6]{H-liouville-wiener}):

\begin{lemma}\label{cap-potential}
For every open set $U$ in $X$,
\[
\kap U \ge \sup\{\|\mu_V\|\colon V\mbox{ open and }\ov V \mbox{ compact in } U\}\ge 
c^{-2} \kap U.
\] 
\end{lemma}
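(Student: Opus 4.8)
The plan is to establish the two inequalities of Lemma~\ref{cap-potential} separately, exploiting the variational characterization~(\ref{inner-cap}) of $\kapi U=\kap U$ together with the representation $\hat R_1^V=G\mu_V$ of equilibrium potentials and the two-sided bound~(\ref{Gg}).

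\emph{The lower bound $\kap U\ge\sup\{\|\mu_V\|\colon V\text{ open},\ \ov V\text{ compact in }U\}$.}  Fix an open set $V$ with $\ov V$ compact in $U$.  Since $\ov V$ is compact, the reduced function $R_1^{\ov V}$ is dominated at infinity by a potential (for instance a potential that is strictly positive; use property~(C) and the fact that $1$ is majorized locally), hence $\hat R_1^V\le\hat R_1^{\ov V}$ is a potential and admits the representation $\hat R_1^V=G\mu_V$ with $\mu_V$ supported by $\ov V\subset U$.  As $\hat R_1^V\le R_1^V\le 1$, the measure $\mu_V$ is admissible in~(\ref{inner-cap}): it satisfies $\mu_V(X\setminus U)=0$ and $G\mu_V\le 1$.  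Therefore $\|\mu_V\|\le\kapi U=\kap U$, and taking the supremum over all such $V$ gives the first inequality.  (Here one should be slightly careful that $\hat R_1^V$ really is a potential and not merely excessive; this is where relative compactness of $\ov V$ in $U$, hence in $X$, and Assumption~\ref{main-ass} are used to rule out a harmonic part at infinity — this is the one point that needs a short argument rather than a one-line citation.)

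\emph{The upper bound $\kap U\le c^2\sup\{\|\mu_V\|\colon V\text{ open},\ \ov V\text{ compact in }U\}$.}  Let $\mu$ be any measure with $\mu(X\setminus U)=0$ and $G\mu\le 1$; it suffices to bound $\|\mu\|$ by $c^2$ times the supremum on the right.  First reduce to compactly supported $\mu$: writing $U$ as an increasing union of relatively compact open sets $V_n$ with $\ov V_n$ compact in $U$ and replacing $\mu$ by $\mu|_{V_n}$, we have $\|\mu|_{V_n}\|\uparrow\|\mu\|$ and $G(\mu|_{V_n})\le G\mu\le 1$, so we may assume $\supp\mu\subset \ov V$ for some open $V$ with $\ov V$ compact in~$U$.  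Now compare $\mu$ with the equilibrium measure $\mu_V$ of $V$.  On the support of $\mu$ we have $\hat R_1^V=R_1^V=1$ (since $\ov V\supset\supp\mu$ lies in $\{R_1^V=1\}$, because $R_1^V=1$ on $V$ and, by continuity/fine continuity, on $\ov V$), whence $G\mu_V=1\ge G\mu$ on $\supp\mu$.  Using~(\ref{Gg}) to pass from $G$ to $g\circ\rho$ and back — $G\le c\,g\circ\rho$ and $g\circ\rho\le c\,G$ — together with the domination principle applied to the kernel $g\circ\rho$ (equivalently, integrating the inequality $G\mu\le c^2\,G\mu_V$ on $\supp\mu$ against a suitable measure, or invoking the energy/maximum principle for potentials as in \cite[Lemma 1.6]{H-liouville-wiener}), one obtains $\|\mu\|\le c^2\,\|\mu_V\|$.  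Taking first the supremum over $V$ and then recalling $\|\mu|_{V_n}\|\uparrow\|\mu\|$ yields $\kap U=\kapi U\le c^2\sup\{\|\mu_V\|\colon V\text{ open},\ \ov V\text{ compact in }U\}$.

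\emph{Main obstacle.}  The delicate step is the upper bound, specifically the passage $\|\mu\|\le c^2\|\mu_V\|$ from the pointwise comparison $G\mu\le G\mu_V$ on $\supp\mu$.  For the kernel $G$ itself one would like a domination principle ($G\mu\le G\nu$ on $\supp\mu$ implies $G\mu\le G\nu$ everywhere, hence $\|\mu\|\le\|\nu\|$ after testing against an appropriate reference measure), but such a principle is not assumed directly for $G$; it is only available through the comparison with $g\circ\rho$, which costs the factors $c$ on each side and produces the $c^2$.  Making this rigorous requires either citing the corresponding argument in \cite[Lemma 1.6]{H-liouville-wiener} (which is exactly the statement being quoted) or reproducing the short balayage-theoretic computation: $\|\mu\|=\int\hat R_1^V\,d\mu=\int R_1^{\ov V}\,d\mu\ge\int (G\mu)\,d\mu_V \cdot(\text{const})$, rearranged via Fubini and the two-sided estimate~(\ref{Gg}) to bound $\int G\mu\,d(\text{suitable measure})$ from below by a constant times $\|\mu\|$.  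Everything else is bookkeeping with reduced functions and the monotone convergence $\kap U_n\uparrow\kap U$ already recorded before the lemma.
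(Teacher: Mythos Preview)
The paper does not prove this lemma; it merely quotes \cite[Lemma~1.6]{H-liouville-wiener}. So there is no in-paper argument to compare against directly, only the one in the cited reference.

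Your first inequality is fine: $\mu_V$ is admissible in~(\ref{inner-cap}) because $G\mu_V=\hat R_1^V\le 1$ and $\mu_V$ is supported by $\ov V\subset U$. (That $\hat R_1^V$ is a potential follows, as you suspect, from domination by a continuous real potential: pick $y_0\notin\ov V$, note $G(\cdot,y_0)\ge\delta>0$ on the compact~$\ov V$, whence $R_1^V\le\delta^{-1}G(\cdot,y_0)$.)

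For the second inequality your essential idea is correct, but the exposition wanders. The appeals to a ``domination principle for $g\circ\rho$'' or to a maximum principle are red herrings: $g\circ\rho$ is not a potential kernel, and a domination principle would not yield a mass comparison anyway. The only consequence of~(\ref{Gg}) you need is the quasi-symmetry $G(x,y)\le c\,g(\rho(x,y))=c\,g(\rho(y,x))\le c^2 G(y,x)$. After your reduction to a measure with $\mu(X\setminus V)=0$ for some open $V$ with $\ov V$ compact in $U$ (take $V:=V_{n+1}$ and replace $\mu$ by $\mu|_{V_n}$), one has $\hat R_1^V=R_1^V=1$ on~$V$, and the whole step is a single Fubini computation:
\[
\|\mu\|=\int_V G\mu_V\,d\mu=\iint G(x,y)\,d\mu_V(y)\,d\mu(x)
\le c^2\iint G(y,x)\,d\mu_V(y)\,d\mu(x)=c^2\int G\mu\,d\mu_V\le c^2\|\mu_V\|.
\]
This is almost certainly the argument in \cite{H-liouville-wiener} that the paper is invoking. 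Your ``Main obstacle'' paragraph gestures at exactly this chain, but the displayed inequality there has the wrong direction and the surrounding alternatives should be dropped; they only obscure a two-line proof. Also note that citing \cite[Lemma~1.6]{H-liouville-wiener} as a way of ``making this rigorous'' is circular, since that is precisely the statement you are asked to prove.
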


\section{Hitting of sets before leaving large balls} 

Let us first recall the following simple fact (see \cite{H-fuku}), where,
as usual,    
\[
       \tau_U:=T_{\uc}.
\]

\begin{lemma}\label{subset-U}
Let  $A$ be a Borel measurable set in an open set $U\subset X$ and $\g>0$.\\
If  $R_1^A\le \g$ on $U^c$, then \\[-3mm]
\[
P^x[T_A<\tau_U]\ge R_1^A(x)- \g,\qquad\mbox{ for every } x\in U.
\]
\end{lemma}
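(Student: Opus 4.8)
The statement is a standard ``reduced function versus hitting probability'' estimate, and the natural route is to compare the reduced function $R_1^A$ with the quantity $P^x[T_A<\tau_U]$ via strong Markov arguments. First I would recall from \eqref{connection} that $R_1^A(x)=P^x[T_A<\infty]$ for $x\in U$, since $A$ is Borel measurable. The idea is to decompose the event $\{T_A<\infty\}$ according to whether the process hits $A$ before or after leaving $U$:
\[
P^x[T_A<\infty]=P^x[T_A<\tau_U]+P^x[\tau_U\le T_A<\infty].
\]
(Here I am using that starting in the open set $U$ one has $\tau_U>0$ a.s., and on $\{T_A=\tau_U\}$ the first term already counts the path, so the split can be arranged cleanly; a harmless measure-zero adjustment on the boundary behaviour may be needed.) It then suffices to bound the second term by $\g$.

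For the second term I would apply the strong Markov property at the stopping time $\tau_U$: on the event $\{\tau_U<\infty\}$,
\[
P^x[\tau_U\le T_A<\infty]\le E^x\!\left[\mathbbm 1_{\{\tau_U<\infty\}}\,P^{X_{\tau_U}}[T_A<\infty]\right]
= E^x\!\left[\mathbbm 1_{\{\tau_U<\infty\}}\, R_1^A(X_{\tau_U})\right].
\]
Now $X_{\tau_U}\in U^c$ on $\{\tau_U<\infty\}$ (it lands in the complement, at least quasi-everywhere, and by the fine-continuity properties of $\hat R_1^A$ one may work with $\hat R_1^A\le R_1^A$ if one wants a genuinely pointwise bound on $U^c$ — recall $\hat R_1^A=R_1^A$ on $A^c$ and on $U^c\subset A^c$ this is exactly $R_1^A$). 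By hypothesis $R_1^A\le\g$ on $U^c$, so the right-hand side is at most $\g\, P^x[\tau_U<\infty]\le\g$. Combining,
\[
P^x[T_A<\tau_U]\ge R_1^A(x)-\g,\qquad x\in U,
\]
which is the claim.

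The only genuinely delicate point is the boundary behaviour at $\tau_U$: one must ensure that $X_{\tau_U}$ indeed lands in $U^c$ where the bound $R_1^A\le\g$ applies, and that the event decomposition does not lose mass on $\{T_A=\tau_U\}$. For a Hunt process and an open set $U$ this is standard (quasi-left-continuity plus the fact that $U^c$ is finely closed so that $X_{\tau_U}\in U^c$ on $\{\tau_U<\infty\}$), but it is the step I would write out most carefully. Everything else is a routine application of \eqref{connection} and the strong Markov property; since the lemma is attributed to \cite{H-fuku}, I expect the published proof to be essentially this short argument, possibly phrased directly in terms of reduced measures $\vx^\uc$ rather than probabilistically.
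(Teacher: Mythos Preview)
Your argument is correct. The paper itself does not prove this lemma; it merely states it as a ``simple fact'' and cites \cite{H-fuku}. Your probabilistic proof via the decomposition $P^x[T_A<\infty]=P^x[T_A<\tau_U]+P^x[\tau_U\le T_A<\infty]$ and strong Markov at~$\tau_U$ is exactly the expected short argument, and the delicate points you flag (that $X_{\tau_U}\in U^c$ since $U^c$ is closed and the process is right-continuous, and that $A\subset U$ so $X_{\tau_U}\notin A$) are handled correctly.

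It is worth noting that the paper, in the Remark following Proposition~\ref{hit-A}, sketches the purely analytic alternative you anticipated: using the identity $P^x[X_{T_A}\in B;\,T_A<\tau_U]=\vx^{A\cup U^c}(B)$ together with the iterated-balayage formula~(\ref{it-bal}) applied to $A\subset A\cup U^c$, one obtains
\[
R_1^A(x)=\vx^{A\cup U^c}(A)+\int_{U^c} R_1^A\,d\vx^{A\cup U^c}\le P^x[T_A<\tau_U]+\gamma,
\]
since $\vx^{A\cup U^c}|_{A^c}$ is carried by $U^c$. This is the same inequality reached without invoking strong Markov explicitly; the two routes are equivalent reformulations of one another.
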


Using Lemma \ref{cap-potential}, this   leads to a lower estimate 
for the probability of hitting a~subset of a~ball before leaving a much larger ball (cf.\  \cite[Proposition 4]{H-fuku}).

\begin{proposition}\label{hit-A}
Let $\eta:= (2c^3c_D^2)\inv$ and let  $0<\a<1/2$, $0<r<R_0$ be  such that   $g((1-2\a) r)\le
c\eta g(\a r)$. Then, for all $x_0\in X$,  
$x\in B:=B(x_0,2\a r)$, and Borel measurable sets $A$ in~$B(x_0,2\a r)$,
\begin{equation}\label{hitting-est}
    P^x[T_A<\tau_{B(x_0,  r)} ]\ge \eta  g(\a r) \kapo(A).
\end{equation} 
\end{proposition}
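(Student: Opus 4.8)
The plan is to apply Lemma~\ref{subset-U} with $U:=B(x_0,r)$ and to estimate $R_1^A$ from below on $B:=B(x_0,2\a r)$ and from above on $U^c$ by means of an equilibrium measure together with the two-sided bound \eqref{Gg}. First I reduce: since $P^x[T_{A'}<\tau_U]\le P^x[T_A<\tau_U]$ whenever $A'\subset A$ (because $T_A\le T_{A'}$), and since $\kapo$ is inner regular on Borel sets by a standard capacitability argument, it suffices to prove the estimate for compact $K\subset B$, and the analogue for relatively compact open $V$ with $\ov V\subset B$. For such $K$ the reduced function $\hat R_1^K$ is a potential --- being dominated by a scalar multiple of the potential $G(\cdot,y_0)$, $y_0\in K$ --- so $\hat R_1^K=G\mu_K$ for a unique measure $\mu_K$, and $\mu_K$ is carried by $K$ (Remarks~\ref{Green-first}.2).

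Fix such a $K$ and put $\g:=\sup_{U^c}R_1^K$. Since $K\subset B\subset U$, on $U^c\subset K^c$ one has $R_1^K=\hat R_1^K=G\mu_K$; and for $z\in U^c$ and $y\in K\subset B(x_0,2\a r)$,
\[
   \rho(z,y)\ \ge\ \rho(z,x_0)-\rho(x_0,y)\ >\ (1-2\a)r,
\]
so $G(z,y)\le c\,g((1-2\a)r)$ by \eqref{Gg} and the monotonicity of $g$; integrating against $\mu_K$ and using the standing hypothesis $g((1-2\a)r)\le c\eta\,g(\a r)$,
\[
   \g\ \le\ c\,g((1-2\a)r)\,\|\mu_K\|\ \le\ c^2\eta\, g(\a r)\,\|\mu_K\|.
\]
On the other hand, for $x\in B$ and $y\in K$ we have $\rho(x,y)<4\a r$, hence $G(x,y)\ge c\inv g(4\a r)\ge c\inv c_D^{-2}\,g(\a r)$ by \eqref{doubling}, so that $R_1^K(x)\ge\hat R_1^K(x)=G\mu_K(x)\ge c\inv c_D^{-2}\,g(\a r)\,\|\mu_K\|$. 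Because $c\inv c_D^{-2}-c^2\eta=c^2\eta$ (recall $\eta=(2c^3c_D^2)\inv$), Lemma~\ref{subset-U} yields, for every $x\in B$,
\[
   P^x[T_K<\tau_U]\ \ge\ R_1^K(x)-\g\ \ge\ c^2\eta\, g(\a r)\,\|\mu_K\|.
\]
The verbatim computation with a relatively compact open $V$, $\ov V\subset B$, in place of $K$ gives $P^x[T_V<\tau_U]\ge c^2\eta\, g(\a r)\,\|\mu_V\|$.

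To conclude: for open $A\subset B$ take the supremum over $V$ with $\ov V$ compact in $A$ and invoke Lemma~\ref{cap-potential}, obtaining $P^x[T_A<\tau_U]\ge c^2\eta\, g(\a r)\sup_V\|\mu_V\|\ge c^2\eta\, g(\a r)\,c^{-2}\kap A=\eta\, g(\a r)\,\kapo(A)$. For compact $K\subset B$ one needs in addition the reverse bound $\|\mu_K\|\ge c^{-2}\kapo(K)$; this follows from Lemma~\ref{cap-potential} by outer approximation: one checks $\kapi(W)\le c^2\|\mu_{W'}\|$ for open $K\subset W\subset W'$ with $\ov{W'}$ compact --- using $\hat R_1^{W'}=1$ on $W'$, the symmetry estimate $G(x,y)\le c^2G(y,x)$, and $G\mu\le1$ for the competitors $\mu$ in \eqref{inner-cap} --- and then lets $W'\downarrow K$, where $\hat R_1^{W'}\downarrow\hat R_1^K$ forces $\|\mu_{W'}\|\to\|\mu_K\|$. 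Combining this with the key estimate and the inner regularity of $\kapo$ settles the general Borel case.

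The Green-function estimates and the bookkeeping of constants are routine; the one genuinely delicate point is the passage from relatively compact open sets to arbitrary compact (hence Borel) sets --- concretely, the inequality $\|\mu_K\|\ge c^{-2}\kapo(K)$, which rests both on the outer approximation above and on the continuity $\|\mu_{W'}\|\to\|\mu_K\|$ as $W'\downarrow K$ (a tightness-plus-uniqueness argument for the equilibrium measures).
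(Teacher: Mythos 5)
Your core computation is the paper's: for an open $V$ with $\ov V$ compact, use the Green function bounds \eqref{Gg}, the doubling property \eqref{doubling}, and the equilibrium measure $\mu_V$ to get $R_1^V\ge 2c^2\eta\,g(\a r)\|\mu_V\|$ on $B(x_0,2\a r)$ and $R_1^V\le c^2\eta\,g(\a r)\|\mu_V\|$ off $B(x_0,r)$, then invoke Lemma~\ref{subset-U} and Lemma~\ref{cap-potential}. Constants and inequalities check out.

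Where you diverge is the reduction from a general Borel set $A$, and that is where the gaps are. The paper simply says one may assume $A$ is open (citing \cite[VI.3.14]{BH}), which keeps everything within the scope of Lemma~\ref{cap-potential}. You instead route through compact sets, asserting that ``$\kapo$ is inner regular on Borel sets by a standard capacitability argument''. That is not a given here: $\kapo$ is defined only as an infimum over open supersets, and the paper establishes $\kap U_n\uparrow\kap U$ for increasing open sets but says nothing about continuity along decreasing compacts. To call Choquet capacitability ``standard'' you would first have to verify that $\kapo$ is a Choquet capacity, which is real work that is neither done here nor in the paper. Second, your treatment of compact $K$ leans on $\mu_K$ being carried by $K$, justified by Remarks~\ref{Green-first}.2 via harmonicity of $\hat R_1^K$ on $X\setminus K$ --- but the paper explicitly cautions, right after introducing equilibrium measures, that ``for a general balayage space this may already fail if $A$ is compact'' (\cite[V.9.1]{BH}). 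The subtlety is that $\hat R_1^K$ agrees with $R_1^K$ only on $K^c$, and for $V\subset\subset X\setminus K$ the exit measure $\ve_x^{V^c}$ can charge the irregular points of $K$ where the two differ, so harmonicity of $\hat R_1^K$ on $K^c$ is not automatic. Third, the claimed convergence $\|\mu_{W'}\|\to\|\mu_K\|$ as $W'\downarrow K$ (your ``tightness-plus-uniqueness argument'') is asserted rather than proved; it requires identifying a vague limit of $(\mu_{W'})$ with $\mu_K$, which runs into the same irregular-point issue since $R_1^{W'}\downarrow R_1^K$ need not equal $\hat R_1^K$. All three of these obstacles disappear once one restricts to open $A$, as the paper does, which is precisely why the paper's proof never touches equilibrium measures of compact sets.
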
 

\begin{proof} To prove  (\ref{hitting-est}) we may assume without loss of
generality that $A$ is open (see \cite[VI.3.14]{BH}). Let~$V$ be an open set such that $\ov V$~is compact in $A$.
Since $\rho(x, \cdot) \le 4\a r$ on $\ov V$, we have
\[
        R_1^V(x)=\int G(x,z)\,d\mu_V(z)\ge   c\inv g(4 \a r) \|\mu_V\|\ge  2 c^2\eta g(\a r) \|\mu_V\| .
\]
If  $y\in X\setminus B(x_0, r)$,  then $\rho(y,\cdot)\ge (1-2\a )r$ on $\ov V$,   and therefore
\[
  R_1^V(y)=\int G(y,z)\,d\mu_V(z) \le c g((1-2\a) r) \|\mu_V\|\le c^2 \eta g(\a r)  \|\mu_V\|.
\]
So, using Lemma \ref{subset-U}, 
\[
P^x[T_A<\tau_{B(x_0, r)}] \ge P^x[T_{V}<\tau_{B(x_0,  r)}]   \ge  c^2\eta  g(\a r) \|\mu_V\|.
\] 
An application of Lemma \ref{cap-potential} completes the proof.
\end{proof}

\begin{remark}{\rm
Let us note that our probabilistic statements and proofs can be replaced by analytic ones
using that, for all Borel measurable sets $A,B$ in an open set $U$,
\[
       P^x[X_{T_A}\in B; T_A<\tau_U] =\vx^{A\cup \uc}(B)
\]
(see \cite[VI.2.9]{BH}) and, for all Borel measurable sets $B$ in $X$ and $B\subset A\subset X$,
\begin{equation}\label{it-bal}
     \vx^B=\vx^A|_B+ \bigl(\vx^A|_{B^c}\bigr){}^B.
\end{equation} 
(If $x\in B$,  then (\ref{it-bal}) holds trivially. If $x\notin B$ and $p\in \Px$,  then, by  \cite[VI.9.1]{BH},
\[
\hat R_p^B(x)= R_p^B(x)=\int R_p^B\,d\vx^A=\int_B p\,d\vx^A+\int_{B^c} \hat R_p^B\,d\vx^A.)
\]
}
\end{remark}

\section{Equilibrium potential and capacity of balls}

The following estimates are 
\cite[Proposition 1.7]{H-liouville-wiener}.

\begin{proposition}\label{Rge}
Let $x\in X$, $r>0$, and $B:=B(x,r)$. Then
the reduced function  $R_1^B$ is a potential {\rm(}in fact, bounded by a potential $p\in \Px${\rm)}, 
\[
   R_1^B\le c \,\frac{G(\cdot,x)}{g(r)}\le c^2\, \frac{ g(\rho(\cdot,x))}{g(r)}, \qquad \|\mu_B\|\vee \kap B\le c\,\frac 1{g(r)}, 
\]
\[
 R_1^B\ge  c\inv  \kap B \cdot g(\rho(\cdot,x)+r).
\]
\end{proposition}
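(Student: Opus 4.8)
The plan is to control $R_1^B$ from above and below by explicit multiples of $G(\cdot,x)$, and then read off the capacity estimates from Lemma \ref{cap-potential}. The starting observation is that, since $B=B(x,r)$ is open, $R_1^B\in\W$ and $R_1^B=1$ on $B$; in particular $R_1^B(x)=1$. Because $G=g\circ\rho$ (up to the constant $c$) and $g$ is decreasing, on $\ov B$ one has $G(\cdot,x)\ge c\inv g(r)$, hence $c\,G(\cdot,x)/g(r)\ge 1$ on $\ov B$. Since $G(\cdot,x)$ is a potential (Assumption \ref{main-ass}(i)), so is $c\,G(\cdot,x)/g(r)$, and it is an element of $\W$ dominating $1$ on $B$; by the very definition of the reduced function as the infimum of such majorants, this yields $R_1^B\le c\,G(\cdot,x)/g(r)$, and therefore $R_1^B\le c^2 g(\rho(\cdot,x))/g(r)$ by the right-hand inequality in \eqref{Gg}. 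In particular $R_1^B$ is bounded by a potential, hence is itself a potential (its greatest lower semicontinuous minorant is $R_1^B$ itself since $B$ is open), and the equilibrium measure $\mu_B$ exists with $\hat R_1^B=R_1^B=G\mu_B$ supported on $\ov B$.

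For the mass bound, evaluate $R_1^B=G\mu_B$ at $x$: since $\rho(\cdot,x)< r$ on $B$ and $\mu_B$ is carried by $\ov B$, the right inequality of \eqref{Gg} gives
\[
1=R_1^B(x)=\int G(x,z)\,d\mu_B(z)\le c\,g(\cdot)\text{-bound},
\]
more precisely $1\le c\,\sup_{z\in\ov B}g(\rho(x,z))\cdot\|\mu_B\|$; but $g$ is decreasing, so this is at best $1\le c\,g(0)\|\mu_B\|$, which is useless because $g(0)=\infty$. The correct route is instead to integrate $R_1^B\le c\,G(\cdot,x)/g(r)$ against another equilibrium-type measure, or more directly: apply $R_1^B=G\mu_B$ and use that on the \emph{support} we only know $\rho\le r$, so one should instead bound $\|\mu_B\|$ via Lemma \ref{cap-potential} together with the upper estimate for $R_1^B$. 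Concretely, for any open $V$ with $\ov V$ compact in $B$, $G\mu_V=R_1^V\le R_1^B\le c\,G(\cdot,x)/g(r)$; integrating this inequality with respect to a measure $\theta$ for which $G\theta\equiv 1$ somewhere controlled, or simply evaluating at a point far from $x$, extracts $\|\mu_V\|$. The cleanest version: pick any $y$ with $\rho(y,x)$ of order $r$ (legitimate once we note balls are nonempty in the relevant range, or argue with $V_0$); then $c\inv g(\rho(y,x)+\diam)\|\mu_V\|\le\int G(y,z)\,d\mu_V(z)=R_1^V(y)\le c\,G(y,x)/g(r)\le c^2 g(\rho(y,x))/g(r)$, and the doubling property \eqref{doubling} makes $g(\rho(y,x))$ and $g(\rho(y,x)+\diam)$ comparable, giving $\|\mu_V\|\le c'/g(r)$; letting $V\uparrow B$ and invoking Lemma \ref{cap-potential} yields $\|\mu_B\|\vee\kap B\le c/g(r)$ after tracking constants carefully.

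For the lower bound on $R_1^B$, write $R_1^B(y)=G\mu_B(y)=\int G(y,z)\,d\mu_B(z)$ and use the left inequality of \eqref{Gg} together with $\rho(y,z)\le\rho(y,x)+\rho(x,z)\le\rho(y,x)+r$ for $z\in\ov B$ (triangle inequality) and the monotonicity of $g$: this gives $G(y,z)\ge c\inv g(\rho(y,x)+r)$, whence $R_1^B(y)\ge c\inv g(\rho(\cdot,x)+r)\,\|\mu_B\|$. Finally, to replace $\|\mu_B\|$ by $\kap B$ one uses Lemma \ref{cap-potential}, which gives $\|\mu_B\|\ge c^{-2}\kap B$ up to the approximation by relatively compact $V$; absorbing the $c$'s produces $R_1^B\ge c\inv\kap B\cdot g(\rho(\cdot,x)+r)$. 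The main obstacle is the mass estimate $\|\mu_B\|\le c/g(r)$: the naive evaluation at $x$ fails because $g$ blows up at $0$, so one must genuinely combine the \emph{upper} bound $R_1^B\le c\,G(\cdot,x)/g(r)$ with Lemma \ref{cap-potential} and the doubling property, being careful that all constants collapse to the single $c$ claimed; the rest is triangle inequality and monotonicity of $g$.
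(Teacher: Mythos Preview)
The paper does not prove this here; it cites \cite[Proposition~1.7]{H-liouville-wiener}. Your argument for the upper bound $R_1^B\le c\,G(\cdot,x)/g(r)$ is correct. But your treatment of the mass bound contains a real error: the ``naive'' evaluation at the centre $x$ \emph{does} work --- you simply used the wrong half of \eqref{Gg}. To bound $\|\mu_B\|$ from above you need a \emph{lower} bound on $G(x,z)$ for $z\in\supp\mu_B\subset\ov B$: since $\rho(x,z)\le r$ there and $g$ is decreasing, the left inequality in \eqref{Gg} gives $G(x,z)\ge c\inv g(r)$, and hence
\[
1=R_1^B(x)=\int G(x,z)\,d\mu_B(z)\ge c\inv g(r)\,\|\mu_B\|,
\]
so $\|\mu_B\|\le c/g(r)$. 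The same one-line estimate, applied to any measure $\mu$ supported in $B$ with $G\mu\le 1$ (hence $G\mu(x)\le 1$), gives $\|\mu\|\le c/g(r)$ and therefore $\kap B\le c/g(r)$ directly from \eqref{inner-cap}. Your detour through a point $y$ at distance $\approx r$ from $x$ is unnecessary, relies on the existence of such a point (not guaranteed in a general metric space), and yields only the weaker constant $c^3c_D$.

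Your lower-bound argument also loses constants: combining $R_1^B(y)\ge c\inv g(\rho(y,x)+r)\|\mu_B\|$ with Lemma \ref{cap-potential} gives at best $c^{-3}$ in front of $\kap B$, and the extra factors do not ``absorb''. To obtain the stated $c\inv$ one argues directly from \eqref{inner-cap} via the domination principle: for any $\mu$ supported in $B$ with $G\mu\le 1$ one has $G\mu\le 1=R_1^B$ on $\supp\mu\subset B$, hence $G\mu\le R_1^B$ everywhere, so that $R_1^B(y)\ge G\mu(y)\ge c\inv g(\rho(y,x)+r)\|\mu\|$; taking the supremum over all such $\mu$ replaces $\|\mu\|$ by $\kap B$ with no loss.
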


For the next three sections  
we assume, in addition,  the following. 

\begin{assumption}\label{ex-c0}
 There exists $c_0\ge 1$  such that,  for all $x\in X$ and  $0<r<R_0$,
\begin{equation}\label{cap-R}
                                          \kap B(x,r)\ge c_0\inv  g(r) \inv. 
\end{equation} 
\end{assumption}

Then,  by Proposition   \ref{Rge},  for all $x\in X$ and  $0<r<R_0$,    
\begin{equation}\label{RB}
                 R_1^{B(x,r)} \ge (cc_0)\inv \, \frac{g(\rho(\cdot,x)+r)}{g(r)}\,. 
\end{equation}  

\begin{examples}\label{diff} {\rm
1. Assume for the moment that  $(X,\W)$ is a harmonic space, that is, $\mathfrak X$ is a diffusion.
Moreover, suppose that $X$ is non-compact, but balls are relatively
compact. Then (\ref{cap-R}) holds with $c_0:=c^3c_D$ if $0<r<R_0/2$.  

Indeed, let $x\in X$, $0< r< R_0/2$, $B:=B(x,r)$.    
Given $\tilde c>c$,  we have $G(\cdot,x)\le c g(r)<\tilde c g(r)$  on $X\setminus   B$, hence, for some $0<\tilde r<r$,
 $G(\cdot,x)< \tilde c g(r)$  on~$X\setminus B(x,\tilde r)$,  
and we see,  by the minimum principle  (\cite [III.6.6]{BH}), 
that $G(\cdot,x)\le \tilde c g(r) R_1^B$ on~$X\setminus B(x,\tilde r)$. So $G(\cdot,x)\le  c g(r) R_1^B$ on~$X\setminus B$.

Choosing $y\in X\setminus \ov B(x,2r)$, we know that the potential $G(\cdot,y)$ is strictly positive and harmonic on $B(x,2r)$,
hence  $\partial B(x,2r)\ne \emptyset$. Let  $z\in \partial B(x,2r)$. Then
\[
R_1^B(z)\ge c\inv  G(z,x)/g(r) \ge c^{-2} g(2r)/g(r)\ge (c^{2}c_D)\inv .
\]
Let $a<R_1^B(z)$. By \cite[VI.1.2]{BH}, there exists   $0<s<r$ such that $V:=B(x,s)$ satisfies $a<R_1^V(z) $.
Since  $\rho(z,\cdot)>r$ on $B$,
\[
      R_1^V (z) =\int G(z,y)\,d\mu_V (y)\le  c g(r)  \|\mu_V\|\le c g(r)\kap B.
\] 
Thus $(c^3c_D)\inv  g(r)\inv \le \kap B$.

2. If $X=\reald$ and $\rho(x,y)=|x-y|$,   
 then Assumption \ref{ex-c0} is satisfied provided 
there exists $C_G\ge 1$ such that      
$d\int_0^r s^{d-1} g(s)\,ds\le C_G r^d g(r)$ for all $0<r< R_0$, since then the normalized Lebesgue measure
$\lambda_{B(x,r)}$ on $B(x,r)$ satisfies   
$G\lambda_{B(x,r)} \le G\lambda_{B(x,r)} (x)\le c C_G g(r)$ (see~\cite{H-fuku}).
So Assumption \ref{ex-c0} is satisfied for rather general isotropic unimodular L\'evy processes. 
}
\end{examples} 

\section{Two crucial lemmas}\label{crucial}

In this and the following section, we assume the following on the jumps.

\begin{assumption}\label{final-ass} 
There exist $0<R_1\le \infty$, 
$c_J>0$ and  $0<\a<1/2$   
 such that, for all $x\in X$, $0<r<R_1$ and $y\in B(x,\a r)$, 
\begin{equation}\label{jump}
      \vx^{B(x,\a r)^c}\le c_J \, \vy^{B(y,  r)^c} \on   B(y,  r)^c.
\end{equation} 
\end{assumption}

\begin{remarks} \label{ass-remarks}
{\rm
1.  If $\mathfrak X$ is a diffusion or -- equivalently -- if $(X,\W)$
is a harmonic space, then  Assumption \ref{final-ass}  holds trivially, since   
the measures  $\vx^{B(x,\a r)^c}$ do not charge the complement of $  B(y,  r)$.

2. If $0<\a'\le \a$, then $B(x, \a r)^c\subset B(x, \a' r)^c$, and hence, by (\ref{it-bal}),   
\begin{equation}\label{MM'}
 \vx^{B(x, \a' r)^c}|_{B(x, \a r)^c} \le \vx^{B(x, \a r)^c}.
\end{equation} 
Therefore we may replace $\a$ in (\ref{jump}) by any smaller $\a'$. 

3. Similarly, (\ref{it-bal}) implies that, for every $y\in B(x,\a r)$, $\vy^{B(y,r)^c}|_{B(x,2r)^c}\le
\vy^{B(x,2r)^c}$  and $\vy^{B(x,r/2)^c}|_{B(y,r)^c}\le \vy^{B(y,r)^c}$.
Hence 
Assumption \ref{final-ass} is equivalent to the assumption, where 
(\ref{jump}) is replaced by 
\begin{equation}\label{jump-1}
  \vx^{B(x,\a r)^c}\le c_J \, \vy^{B(x,  r)^c} \on   B(x,  r)^c.
\end{equation} 
}
\end{remarks}

For a proof of Theorem \ref{harnack}, 
 we  employ essential ideas from \cite{bass-levin}. However,  not
 assuming the existence of a volume measure
 and not having any information on the expectation of hitting times,   
 we shall rely entirely on capacities of sets. 

A very similar approach
has been used in \cite{mimica-harnack}, where the L\'evy process on $\reald$,
$d\ge 3$, with characteristic exponent $\phi(\xi)=|\xi|^2
\ln^{-1}(1+|\xi|^2)-1$ is considered, and $g(r)\approx
r^{2-d}\ln(1/r)$ as $r\to 0$.     

As in Proposition \ref{hit-A}, let $\eta:=(2c^3 c_D^2)\inv$. We may choose $0<\a<1/4$ such that 
Assumption \ref{final-ass} holds with (\ref{jump-1}) in place of (\ref{jump})  and 
\begin{equation}\label{a-choice}
g(  r)\le cc_D\inv \eta g( \a r)\qquad\mbox{ for every }0<r<R_0.
\end{equation} 
 Since $1-2\a\ge 1/2$, we know that
$g((1-2\a)r)\le c_D g(r)\le c \eta g(\a r)$ for every $0<r<R_0$. 

Moreover, let
\[
        \b:=\frac \eta{6 c_0}, \quad \g:=\frac
         16\wedge \frac \b{c_J},\qquad \kappa:=3\b\g=\frac{\eta\gamma}{2 c_0}.
\]
We choose $j_0,m_0,m_1 \in \nat$ such that  
\[
(1+\b)^{j_0}>c_D, \qquad 2^{m_0}>2j_0,\qquad  2^{m_1}\a^2>1,
\]
and define  
\begin{equation}\label{def-K}
           K:=\kappa\inv c_D^{m_0+m_1}   .   
\end{equation} 

Now we fix $x_0\in X$ and $0<R<R_0\wedge R_1$ 
 such that $ B(x_0,R)$ is    
relatively compact. Since $\lim_{r\to 0} g(r)=\infty$   and $g$ is decreasing and continuous on $(0,R_0)$, 
 we may choose $r_j>0$,
  such that 
\begin{equation}\label{choice-rj}
 g( r_j)   = c_D^{m_0}    (1+\b)^{j-1} g(\a^4 R), \qquad j\in\nat.
\end{equation} 

The following two lemmas are crucial for the proof of Theorem
\ref{harnack}.  

\begin{lemma}\label{sum-rj}
The sum of all $r_j$, $j\in\nat$, is less than $\a^4 R$.
\end{lemma}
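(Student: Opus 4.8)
The claim is that $\sum_{j\ge 1} r_j < \a^4 R$, where $r_j$ is defined implicitly by $g(r_j) = c_D^{m_0}(1+\b)^{j-1} g(\a^4 R)$. The whole point is that the right-hand side grows geometrically in $j$, so since $g$ is decreasing the radii $r_j$ must decrease, and we want to control how fast. The natural strategy is to compare consecutive radii: I would show that the doubling inequality $g(r/2)\le c_D g(r)$ forces $r_{j+1}\le r_j/2$ once $j$ is large enough, and then sum the resulting geometric series, taking care of the finitely many initial terms separately.

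\emph{Key steps.} First, observe that from (\ref{choice-rj}), $g(r_{j+j_0})/g(r_j) = (1+\b)^{j_0} > c_D$ by the choice of $j_0$. On the other hand, iterating the doubling property $g(r/2)\le c_D g(r)$ gives $g(r_j/2)\le c_D g(r_j) < g(r_{j+j_0})$; since $g$ is decreasing and continuous, this yields $r_{j+j_0} < r_j/2$ for every $j\in\nat$. Thus along each of the $j_0$ arithmetic progressions $\{r_k, r_{k+j_0}, r_{k+2j_0},\dots\}$ ($k=1,\dots,j_0$) the terms decay at least geometrically with ratio $1/2$, so $\sum_{i\ge 0} r_{k+ij_0} < 2 r_k \le 2 r_{j_0}$ (using $r_1\le\dots\le r_{j_0}$, which follows since $(1+\b)^{j-1}$ is increasing and $g$ decreasing). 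Summing over the $j_0$ progressions, $\sum_{j\ge 1} r_j < 2 j_0 r_{j_0}$. It therefore suffices to show $2 j_0 r_{j_0} < \a^4 R$.

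\emph{Closing the estimate.} By construction $g(r_{j_0}) = c_D^{m_0}(1+\b)^{j_0-1} g(\a^4 R)$. Since $(1+\b)^{j_0}>c_D$, we have $(1+\b)^{j_0-1} < c_D$, so $g(r_{j_0}) < c_D^{m_0+1} g(\a^4 R) \le c_D^{m_0}\cdot c_D\, g(\a^4 R) \le g((\a^4 R)/2^{m_0})$ after $m_0$ applications of doubling; as $g$ is decreasing this gives $r_{j_0} > (\a^4 R)/2^{m_0}$ — wait, that goes the wrong way, so instead I want an \emph{upper} bound on $r_{j_0}$: since $g(r_{j_0}) \ge c_D^{m_0} g(\a^4 R) > g(\a^4 R)$ and $g$ decreasing, $r_{j_0} < \a^4 R$; and more precisely iterating doubling $m_0$ times downward, $g(r_{j_0})\ge c_D^{m_0} g(\a^4 R)\ge g\!\bigl((\a^4 R)/2^{m_0}\bigr)$ gives $r_{j_0}\le (\a^4 R)/2^{m_0}$. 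Combined with $2^{m_0} > 2 j_0$, this yields $2 j_0 r_{j_0} \le 2 j_0 (\a^4 R)/2^{m_0} < \a^4 R$, completing the proof.

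\emph{Main obstacle.} The only delicate point is handling the implicit definition of $r_j$: I must use the continuity and strict decrease of $g$ on $(0,R_0)$ (Remark~\ref{Green-first}.5 and the observation $\lim_{r\to 0} g(r)=\infty$) to legitimately pass between inequalities on $g$-values and inequalities on radii, and to know the $r_j$ exist and lie in $(0,R_0)$. One should also double-check that $r_j<R_0$ so that the doubling inequality (\ref{doubling}) is available at the relevant scales — this is immediate since $g(r_j)\ge g(\a^4R)$ forces $r_j\le\a^4R<R_0$. Everything else is a routine geometric-series computation.
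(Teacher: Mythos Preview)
Your approach is essentially the paper's: group the indices into $j_0$ arithmetic progressions, use the doubling property to get geometric decay within each progression, and close with $2^{m_0}>2j_0$. The paper does it slightly more directly by bounding each $r_{mj_0+k}$ by $2^{-(m_0+m)}\a^4R$ and summing, but the content is the same.

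However, there is a slip in your monotonicity claim. From $g(r_j)=c_D^{m_0}(1+\b)^{j-1}g(\a^4R)$ and the fact that $g$ is decreasing, the sequence $(r_j)$ is \emph{decreasing}, not increasing: $r_1\ge r_2\ge\dots$ So the bound $r_k\le r_{j_0}$ is false for $k<j_0$; you need $r_k\le r_1$ instead. The fix is immediate: replace $r_{j_0}$ by $r_1$ throughout the closing estimate. Then $g(r_1)=c_D^{m_0}g(\a^4R)\ge g(2^{-m_0}\a^4R)$ gives $r_1\le 2^{-m_0}\a^4R$, and $\sum_j r_j<2\sum_{k=1}^{j_0}r_k\le 2j_0r_1\le 2j_0\,2^{-m_0}\a^4R<\a^4R$. (This also sidesteps your unjustified claim that $(1+\b)^{j_0}>c_D$ forces $(1+\b)^{j_0-1}<c_D$, which would require $j_0$ to be minimal.)
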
 

\begin{proof} If $1\le k\le j_0$ and $m\ge 0$, then
$g(r_{mj_0+k})> c_D^{m_0+m} g(\a^4R)\ge g(2^{-(m_0+m)} \a^4R)$,
 and hence $r_{mj_0+k}<2^{-(m_0+m)}\a^4 R$. Thus
\[
\sum\nolimits_{j=1}^\infty r_j
< \sum\nolimits_{m=0}^\infty j_0 2^{-(m_0+m)} \a^4R =j_0 2^{-m_0+1}\a^4R<\a^4 R.
\]
\end{proof}

\begin{lemma}\label{main-lemma}
Let $h\in\H_b^+(B(x_0,R))$ such that $h(y_0)=1$ for some $y_0\in B(x_0,\a^2 R)$. 
If~$j\in\nat$ and  $x\in B(x_0,2\a^2 R)$   such that 
\[ 
h(x)>(1+\b)^{j-1}K,
\]
then there exists $x'\in B(x, r_j/\a^2)$ such that 
\[
       h(x')>(1+\b)^j K.
\]
\end{lemma}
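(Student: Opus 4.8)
The plan is to argue by contradiction, following the Bass--Levin scheme but working entirely with capacities via Proposition~\ref{hit-A}. Suppose the conclusion fails, so that $h\le (1+\b)^j K$ on all of $B(x,r_j/\a^2)$. Write $r:=r_j$ and set $A:=\{h\ge (1+\b)^jK\}\cap B(x,\a r)$ (one first has to check $\a r<R$ and that the relevant balls sit inside $B(x_0,R)$, which follows from Lemma~\ref{sum-rj} together with $x\in B(x_0,2\a^2R)$ and $y_0\in B(x_0,\a^2R)$). The aim is to show that $A$ must have large capacity --- comparable to $g(\a r)\inv$ up to the constant $\kappa$ --- and then to derive a contradiction with the normalization $h(y_0)=1$ by estimating $h$ at a point near $x$ from below.

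First I would get the lower bound on the capacity of $A$. Decompose, for $x$ with $h(x)>(1+\b)^{j-1}K$, the value $h(x)$ using the harmonic average over the ball $B:=B(x,\a r)$: writing $D:=B(x,r)$ (or a slightly larger ball), $h(x)=\vx^{D^c}(h)+$ a term accounting for the mass that exits through $A$ before leaving $D$; more precisely, using $\vx^{A\cup D^c}$ one splits $h(x)$ into the contribution of $A$ and the contribution of $D^c$. The $D^c$-contribution is controlled because on $D^c$ we have $h\le (1+\b)^jK$ if $A$ is the ``high'' set, and one uses the choice of $r_j$ in \eqref{choice-rj} and the geometric growth factor $(1+\b)$ to make the gap between $h(x)$ and the $D^c$-average strictly positive, of size at least a fixed multiple of $\b(1+\b)^{j-1}K$. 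Hence $P^x[T_A<\tau_D]$ times the maximal value of $h$ on $A$ --- which, since $h$ is assumed $\le(1+\b)^jK$ everywhere on the big ball, is at most $(1+\b)^jK$ --- must dominate that gap. This forces $P^x[T_A<\tau_D]\ge 3\b\g/(\text{const})$ roughly, i.e.\ a lower bound on the hitting probability. Then Proposition~\ref{hit-A} (applied with the radius chosen so that $g((1-2\a)r)\le c\eta g(\a r)$, which is arranged in \eqref{a-choice}) converts this into $\kapo(A)\ge \kappa\, g(\a r)\inv$, with $\kappa=\eta\g/(2c_0)$ as defined.

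Next I would use this capacity lower bound to produce a point $x'$ near $x$ where $h$ is large. Pick $x'\in B(x,\a r)\subset B(x,r_j/\a^2)$ and estimate $h(x')$ from below by the probability of hitting $A$ from $x'$ before leaving $B(x,r)$, times the (essentially $(1+\b)^jK$) value of $h$ on $A$: $h(x')\ge (1+\b)^jK\cdot P^{x'}[T_A<\tau_{B(x,r)}]$ up to how $h$ behaves between hitting $A$ and the point. Here is where Assumption~\ref{final-ass} in the form \eqref{jump-1} enters: to get a uniform lower bound on $P^{x'}[T_A<\tau_{B(x,r)}]$ for \emph{all} $x'$ in the smaller ball $B(x,\a r)$ --- not just for $x'$ close to $x$ --- one relates the exit measure $\ve_{x}^{B(x,\a r)^c}$ to $\ve_{x'}^{B(x,r)^c}$, which lets the process started at $x'$ ``see'' $A$ with probability comparable to the one from $x$, losing only the factor $c_J$. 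Combined with $\g\le \b/c_J$ and $\kappa=3\b\g$, the arithmetic is rigged so that $h(x')\ge (1+\b)^jK$ strictly, contradicting the standing assumption $h\le(1+\b)^jK$ on $B(x,r_j/\a^2)$, unless in fact the desired $x'$ with $h(x')>(1+\b)^jK$ already exists.

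The main obstacle I anticipate is the careful bookkeeping in the decomposition of $h(x)$ and the propagation of the lower bound from $x$ to an arbitrary $x'\in B(x,\a r)$: one must control the ``overshoot'' mass, i.e.\ the part of $\ve_x^{A\cup D^c}$ that lands in $A$ versus in $D^c$, keep track of where harmonicity of $h$ is available (only inside $B(x_0,R)$, so all auxiliary balls must be verified to lie there using Lemma~\ref{sum-rj}), and make the constants $\b,\g,\kappa,j_0,m_0,m_1$ fit together so that every inequality is strict in the right direction. Getting the jump comparison \eqref{jump-1} to yield a genuinely uniform (independent of $x'$) lower bound on the hitting probability --- rather than one that degenerates as $x'$ approaches the boundary sphere $\partial B(x,\a r)$ --- is the delicate point, and it is exactly what Assumption~\ref{final-ass} is designed to handle. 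Everything else is a matter of assembling Proposition~\ref{hit-A}, \eqref{RB}, and the choices in \eqref{a-choice}--\eqref{choice-rj}.
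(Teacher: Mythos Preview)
Your sketch has the overall Bass--Levin flavor, but the roles of ``high'' and ``low'' sets are reversed compared to the paper, and this reversal creates a genuine gap. You want to show that the high set $A=\{h\ge(1+\b)^jK\}\cap B(x,\a r)$ has large capacity by decomposing $h(x)=\vx^{A\cup D^c}(h)$ and claiming the $D^c$-contribution is controlled. But the contradiction hypothesis only bounds $h$ on $B(x,r/\a^2)$, not on all of $D^c=B(x,r)^c$; the process can and does jump outside $B(x,r/\a^2)$, where $h$ may be enormous (it is merely bounded globally, with an unknown bound). Controlling this overshoot is exactly what the jump assumption \eqref{jump-1} is for, and it must enter \emph{here}, not in a later propagation step. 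Moreover, Proposition~\ref{hit-A} gives a lower bound on hitting probability from capacity, not the reverse; you cannot conclude $\kapo(A)$ large from a hitting-probability lower bound via that proposition. Finally, your second step is circular: even if $A$ had large capacity, hitting $A$ only yields $h\ge(1+\b)^jK$ on $A$, and $P^{x'}[T_A<\tau]\cdot(1+\b)^jK<(1+\b)^jK$, so no contradiction.

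The paper runs the argument in the opposite direction. It defines $U_1=\{h>\g h(x)\}\cap B(x,r)$ (high) and $U_2=\{h<2\g h(x)\}\cap B(x,r)$ (low). The normalization $h(y_0)=1$ is used first: since $1=h(y_0)\ge \g h(x)\,P^{y_0}[T_V<\tau_{B(x_0,\a R)}]$ for $V\subset U_1$, Proposition~\ref{hit-A} (applied at $y_0$ in the large ball $B(x_0,\a R)$) bounds $\kap U_1$ \emph{from above}, hence by subadditivity $\kap U_2>(2c_0 g(r))^{-1}$. Then Proposition~\ref{hit-A} is used \emph{forward} at $x$: the process from $x$ hits a compact $L\subset U_2$ before leaving $B(x,r/\a)$ with probability $>3\b$. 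The jump assumption enters precisely to bound the overshoot $\vx^{B(x,r/\a)^c}(1_{B(x,r/\a^2)^c}h)\le\b h(x)$: were this false, \eqref{jump-1} would force $h(y)>\g h(x)$ for all $y\in B(x,r)$, contradicting $U_2\ne\emptyset$. The final balance $h(x)\le 2\g h(x)\nu(L)+a(1-\nu(L))+\b h(x)$ with $a=\sup h(B(x,r/\a^2))$ then yields $a>(1+\b)h(x)$ directly, without any contradiction hypothesis. Your sketch would need to be rebuilt along these lines.
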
 

\begin{proof} Let $j\in\nat$, $r:=r_j$,  and 
$x\in B(x_0,2\a^2R)$  with $h(x)>(1+\b)^{j-1}K$.
Let 
\[
     U_1:=B(x,  r)\cap \{h>\g h(x)\} \und  U_2:=B(x,  r)\cap \{h<2\g h(x)\}. 
\]
Then $U_1$ and $U_2$ are open sets and  $U_1\cup U_2=B(x,   r)\subset B(x_0,2\a^2  R)$. In particular,  
\begin{equation}\label{appl-subadd}
\kap B(x,  r)\le \kap U_1+\kap U_2.
\end{equation} 

 If  $V$ is an open set with $ \ov V\subset U_1$, then, 
by Proposition \ref{hit-A}, 
\begin{eqnarray*} 
  1=h(y_0)&=&\ve_{y_0}^{\ov V\cup B(x_0,\a R)^c}(h)\ge 
  \g h(x) \ve_{y_0}^{\ov V\cup B(x_0,\a R)^c}(\ov V)\\[1.5mm]
&\ge& \g h(x) P^{y_0} [T_V< \tau_{B(x_0,\a R)} ] 
\ge \eta \g h(x) g(\a^2 R)\kap V.
\end{eqnarray*} 
So $\kap U_1\le (\eta \g h(x) g(\a^2 R))\inv$. 
By Assumption \ref{ex-c0},  $\kap B(x,r)\ge (c_0g(r))\inv$. 
Since $g(\a^4R)\le g(2^{-m_1} \a^2 R) \le c_D^{m_1} g(\a^2 R)$,    
we conclude,  by  (\ref{choice-rj}) and (\ref{def-K}), that   
\[
\frac {\kap U_1}{\kap B(x,r)}\le \frac {c_0 g(r)} {\eta\g h(x) g(\a^2 R)}
=\frac{c_D^{m_0} (1+\b)^{j-1} g(\a^4 R)}{2\kappa h(x) g(\a^2R)}\le \frac {(1+\b)^{j-1} K}{2h(x)}<\frac 1{2 }.
\]
By (\ref{appl-subadd}), we obtain that
\begin{equation}\label{cap-V}
 \kap U_2>   (1/2)\kap B(x,r) \ge (2c_0g(r))\inv. 
\end{equation} 
We  choose an open set $W$ such that $\ov W\subset U_2$, $\kap W>
(2 c_0 g(r))\inv$, and define   
\[
        L:=\ov W, \qquad  \nu:=\vx^{L\cup B(x,r/\a)^c}.
\]
Then, by Proposition \ref{hit-A},
\[
\nu(L)= P^x[T_L<\tau_{B(x, r/\a)} ] \ge P^x[T_W<\tau_{B(x,r/\a)} ] \ge \eta g(r)\kap W >   \frac {\eta}{2 c_0}=3\b.
\]

By Lemma \ref{sum-rj}, 
 $r/\a^2< \a^2 R$, and hence  $\ov B(x,r/\a^2)\subset B(x_0,3\a^2R)\subset B(x_0,R)$. 
We   claim that $H:=1_{B(x,r/\a^2)^c} h$ satisfies
\begin{equation}\label{m2c}
\vx^{B(x,r/\a )^c} (H) \le \b h(x).
\end{equation} 
Indeed, if not,  then (\ref{jump-1})  implies that, for every $y\in B(x,r)$, 
\[
h(y)=\vy^{B(x,r/\a^2)^c}(h) =\vy^{B(x,r/\a^2)^c}(H)\ge c_J\inv \vx^{B(x,r/\a)^c}(H)> c_J\inv \b h(x)\ge \g h(x),
\]
contradicting the fact that $U_1$ is a proper subset of $B(x,r)$.

Finally, let $a:=\sup h(B(x, r/\a^2))$. Then
\[
h(x)=\nu(h)\le 2\g h(x)\nu(L)+\int_{X\setminus B(x,r/\a) }h\,d\nu,
\]
where
\[
\int_{B(x, r/\a^2)\setminus B(x,r/\a )} h\,d\nu\le a \nu(B(x, r/\a^2)\setminus B(x,r/\a ))\le a(1-\nu(L))
\]
and, by (\ref{it-bal}) and (\ref{m2c}),  
\[
\int_{B(x, r/\a^2)^c} h\,d\nu=\nu(H) 
\le \vx^{B(x,r/\a)^c}(H)\le \b h(x).
\]
Therefore 
\[
h(x)\le 2\g h(x)\nu(L)+a(1-\nu(L))+\b h(x),
\]
and 
\begin{equation}\label{final-est}
      a \ge \frac{1-\b-2\g \nu(L)}{1-\nu(L)} \, h(x)>  (1+\b)h(x) > (1+\b)^j K
\end{equation} 
completing the proof (since $1-2\g\ge 2/3$ and $\nu(L)>3\b$, we have $ (1-2\g)\nu(L)>2\b$,
hence 
$1-\b-2\g \nu(L)>1+\b-\nu(L)\ge (1+\b)(1-\nu(L))$.
\end{proof} 

Finally, we shall use the following little observation. 

\begin{lemma}\label{positive}
Let $U:=B(x,R)$, $x\in X$, $R>0$,  such that $\ov U$ is a proper compact  subset of X.  
Then there exists a function $h\in \H_b^+(U)$ such that $h>0$ on $\ov U$.
\end{lemma}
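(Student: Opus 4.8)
The plan is to exhibit $h$ explicitly as the reduced function $R_1^{B(y,\ve)}$ of a small ball $B(y,\ve)$ lying outside $\ov U$. Since $\ov U$ is a \emph{proper} compact subset of $X$, the open set $X\sms\ov U$ is nonempty; I fix a point $y\in X\sms\ov U$. As $\ov U$ is compact and disjoint from the closed set $\{y\}$, we have $\delta_0:=\rho(y,\ov U)>0$, and I choose $\ve$ with $0<\ve<\delta_0\wedge R_0$. Then $\ov{B(y,\ve)}\subset\{z\in X\colon\rho(z,y)\le\ve\}$ is disjoint from $\ov U$, so in particular $U\subset X\sms\ov{B(y,\ve)}$. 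I then set $h:=R_1^{B(y,\ve)}$.

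First I would check that $h\in\H_b^+(U)$: since $B(y,\ve)$ is open it is not thin at any of its points, whence $h\in\W\subset\B^+(X)$, and $h\le 1$ on $X$ (as $R_u^A\le u$), so $h$ is bounded. Applying (\ref{A-harmonic}) with $A:=B(y,\ve)$ and $u:=w:=1\in\W\cap\C(X)$ yields $h\in\H^+\bigl(X\sms\ov{B(y,\ve)}\bigr)$, hence $h\in\H^+(U)$ by the inclusion noted above, and therefore $h\in\H_b^+(U)$.

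It then remains to see that $h>0$ on $\ov U$ --- in fact on all of $X$. By Proposition \ref{Rge} applied to $B(y,\ve)$,
\[
   h=R_1^{B(y,\ve)}\ \ge\ c\inv\,\kap B(y,\ve)\cdot g\bigl(\rho(\cdot,y)+\ve\bigr)\on X,
\]
where $g>0$ everywhere while $\kap B(y,\ve)\ge(c_0 g(\ve))\inv>0$ by Assumption \ref{ex-c0} (note $g(\ve)<\infty$). Hence $h>0$ on $X$, in particular on $\ov U$, which finishes the argument.

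I do not expect a genuine obstacle here. The only points requiring mild care are that the harmonicity of $R_1^{B(y,\ve)}$ be available on \emph{all} of $U$ and not merely on relatively compact subsets --- which is exactly what (\ref{A-harmonic}) provides once $\ov{B(y,\ve)}$ has been separated from $U$ --- and that the positivity obtained be genuine pointwise positivity rather than merely an a.e.\ or fine-topological statement, which is why one appeals to the explicit lower bound of Proposition \ref{Rge} (together with Assumption \ref{ex-c0}) instead of to the probabilistic identity $R_1^{B(y,\ve)}(\cdot)=P^{\cdot}[T_{B(y,\ve)}<\infty]$.
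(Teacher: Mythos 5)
Your proof is correct but takes a genuinely different route from the paper's. The paper fixes $y\notin\ov U$, chooses a relatively compact open neighborhood $V$ of $\ov U$ with $y\notin\ov V$, and sets $h_n:=H_V(G(\cdot,y)\wedge n)\in\H_b^+(V)$; since $h_n\uparrow H_VG(\cdot,y)=G(\cdot,y)$, $G(\cdot,y)$ is continuous and strictly positive on the compact set $\ov U$, and each $h_n$ is lower semicontinuous, some $h_n$ is already strictly positive on $\ov U$. That argument uses only Assumption~\ref{main-ass}. Your construction $h:=R_1^{B(y,\ve)}$ is more direct (no truncation/limit), but its strict positivity leans on Assumption~\ref{ex-c0} through Proposition~\ref{Rge} to secure $\kap B(y,\ve)>0$, and it also implicitly needs $g(\ve)<\infty$ --- true, since $c\inv g(\rho(x,z))\le G(x,z)<\infty$ for $x\ne z$ together with monotonicity and doubling of $g$ forces $g$ to be finite on $(0,\infty)$, but worth a word. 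Since the lemma sits in Section~5, where Assumption~\ref{ex-c0} is already in force, your proof is perfectly adequate there; it is simply a bit less self-contained than the paper's, which establishes the lemma under Assumption~\ref{main-ass} alone.
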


\begin{proof} Let $y\in X\setminus \ov U$ 
 and let $V$ be a relatively compact open neighborhood of $\ov U$ such that $y\notin \ov V$.
 Then, for every $n\in\nat$,
\[
           h_n:=H_V (G(\cdot,y)\wedge n) \in \H_b^+(V) \und h_n\uparrow H_VG(\cdot,y)=G(\cdot,y),
\]
as $n\to\infty$. 
Since $G(\cdot,y)>0$, there exists $n\in\nat$ such that $h_n>0$ on $\ov U$. 
\end{proof}

\section{Proof of Theorem \ref{harnack}}

Let us first give a complete statement of Theorem \ref{harnack}.

\begin{theorem}\label{harnack-complete}
Let $(X,\W)$ be a balayage space, $1\in \W$,  suppose that the Assumptions
{\rm \ref{main-ass}, \ref{ex-c0}, \ref{final-ass}} are satisfied and let $\a, K$ be as in 
Section~\ref{crucial}  {\rm(}see {\rm(\ref{a-choice}), (\ref{def-K}))}.    
Then the following hold.

\begin{itemize}
\item[\rm (1)] 
For every open set $U$ in $X$, $\tilde \H^+(U)=\H^+(U)$.
\item[\rm (2)] 
\emph{Scaling invariant Harnack inequalities}: 
Let  $x_0\in X$,  $0<R<R_0\wedge R_1$, and $B:=B(x_0,R)$ such that $\ov B$ is a proper compact subset of $X$.
Then, for all functions  $h\in \B^+(X)$ which are harmonic in a neighborhood of $\ov B$, 
\begin{equation}\label{har-proof}
     \sup h(B(x_0,\a^2 R))\le K \inf h(B(x_0,\a^2 R)).
\end{equation} 
\end{itemize}
\end{theorem}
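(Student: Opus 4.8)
The plan is to derive both assertions from the two crucial lemmas of Section~\ref{crucial} together with the chaining argument they were designed for.

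\emph{Part (2), the Harnack inequality.} First I would reduce to the case where $h$ is bounded and harmonic on all of $B=B(x_0,R)$: by Lemma~\ref{positive} applied on a slightly larger ball (or directly, using that $h$ is harmonic on a neighborhood of $\ov B$), and by the sheaf property, $h\wedge n\in\tilde\H_b^+$ on a neighborhood of $\ov B$ for large $n$, and by (\ref{hht}) it is in $\H_b^+$; since $\ov B(x_0,\a^2 R)$ is compact and contained in the region of harmonicity, it suffices to prove the estimate for each such truncation and then let $n\to\infty$. If $\inf h(B(x_0,\a^2R))=0$ there is nothing to do after noting (via Lemma~\ref{main-lemma}, run in the contrapositive, or via a minimum-principle argument) that then $h\equiv 0$ near $x_0$; otherwise, after normalizing, I may assume there is $y_0\in B(x_0,\a^2R)$ with $h(y_0)=1$. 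Now suppose, for contradiction, that $\sup h(B(x_0,\a^2R))> K$. Then there is $x\in B(x_0,\a^2R)\subset B(x_0,2\a^2R)$ with $h(x)>K=(1+\b)^0 K$. Applying Lemma~\ref{main-lemma} repeatedly produces points $x=x^{(0)}, x^{(1)}, x^{(2)},\dots$ with $h(x^{(j)})>(1+\b)^jK$ and $\rho(x^{(j)},x^{(j+1)})<r_{j+1}/\a^2$. By Lemma~\ref{sum-rj}, $\sum_j r_j/\a^2 < R$ — wait, more precisely $\sum_j r_j<\a^4R$, so $\rho(x^{(j)},x_0)\le \rho(x^{(0)},x_0)+\sum_{i\ge1} r_i/\a^2<\a^2R+\a^2R=2\a^2R$ for all $j$, so the whole chain stays in $B(x_0,2\a^2R)$ and each application of Lemma~\ref{main-lemma} is legitimate. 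But then $h(x^{(j)})\to\infty$, contradicting boundedness of $h$ on $B(x_0,R)$. Hence $\sup h(B(x_0,\a^2R))\le K = K\cdot h(y_0)$; applying this to $h/\inf h(B(x_0,\a^2R))$ (using that the infimum is attained up to $\ve$ at some interior point which we rename $y_0$) gives (\ref{har-proof}).

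\emph{Part (1), $\tilde\H^+(U)=\H^+(U)$.} The inclusion $\supseteq$ is trivial. For $\subseteq$, let $h\in\tilde\H^+(U)$; we already know $h$ is lower semicontinuous on $U$, and by the sheaf property and (\ref{hht}) it is enough to show $h$ is continuous on a neighborhood of each $x\in U$. Fix $x\in U$ and a ball $B(x,R)\subset\subset U$ with $R<R_0\wedge R_1$ and $\ov B(x,R)$ a proper compact subset of $X$. Truncating, $h\wedge n\in\H_b^+$ near $x$, so by Part~(2) each $h\wedge n$ satisfies a Harnack inequality on $B(x,\a^2R)$ with the \emph{same} constant $K$; letting $n\to\infty$ gives $\sup h(B(x,\a^2R))\le K\,h(x)<\infty$, since $h<\infty$ on $U$. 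Thus $h$ is locally bounded on $U$, hence (being bounded near $x$ and satisfying the mean-value identity (\ref{mv})) lies in $\tilde\H_b^+$ near $x$, and (\ref{hht}) gives continuity there.

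\emph{Main obstacle.} The delicate point is the bookkeeping in the chaining step of Part~(2): one must check that the running point $x^{(j)}$ never leaves $B(x_0,2\a^2R)$, so that the hypothesis ``$x\in B(x_0,2\a^2R)$'' of Lemma~\ref{main-lemma} remains valid at every stage. This is exactly what Lemma~\ref{sum-rj} is for, and the factor $\a^{-2}$ in the step size $r_j/\a^2$ versus the bound $\sum r_j<\a^4R$ is tuned so that $\a^{-2}\sum_j r_j<\a^2R$; combined with the starting radius $<\a^2R$ this keeps the chain inside $2\a^2R$. A secondary subtlety is the reduction to bounded $h$ harmonic on all of $B(x_0,R)$ and the passage to the limit $n\to\infty$, which relies on $K$ being independent of $n$ and on compactness of $\ov B(x_0,\a^2R)$; once Part~(2) is available with a uniform constant, Part~(1) is a short consequence.
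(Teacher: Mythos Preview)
Your chaining argument via Lemmas~\ref{sum-rj} and~\ref{main-lemma} is correct and matches the paper's step~(a). But your reduction to bounded $h$ has a real gap: the truncation $h\wedge n$ is \emph{not} in $\tilde\H^+$ on any neighborhood of $\ov B$. Even if $h\wedge n=h$ on a neighborhood $W$ of $\ov B$, the mean-value identity~(\ref{mv}) at a point $x\in V\subset\subset W$ integrates over all of $V^c$, which for a non-local process charges points outside $W$ where $h$ may exceed $n$; so in general $\ve_x^{V^c}(h\wedge n)<\ve_x^{V^c}(h)=h(x)=(h\wedge n)(x)$, and the sheaf property does not help. The paper's fix is to use $h_n:=H_W(h\wedge n)$ instead, which lies in $\H_b^+(W)$ by (\ref{hht}) together with the iteration $H_VH_W=H_W$ (a special case of (\ref{it-bal})), and satisfies $h_n\uparrow h$. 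Your handling of the case $\inf h(B_0)=0$ is also loose; the paper avoids any minimum-principle discussion by adding $\ve h_0$ with $h_0>0$ from Lemma~\ref{positive}, applying the normalized estimate to $(h+\ve h_0)/(h+\ve h_0)(y)$, and letting $\ve\to 0$.

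The same non-locality issue undermines your Part~(1). Even if you could establish that $h$ is locally bounded on $U$, you cannot then invoke (\ref{hht}): the subscript $b$ there means bounded on all of $X$, not just near $x$, and the argument for (\ref{hht}) uses this global bound. The paper proceeds differently: having proved the Harnack inequality already for all of $\tilde\H^+(W)$ (its step~(c)), it applies it to the difference $h-h_n\in\tilde\H^+(W)$, obtaining $h-h_n\le K\,(h-h_n)(x_0)\to 0$ uniformly on $B_0$. Since each $h_n$ is continuous on $W$, this yields continuity of $h$ on $B_0$.
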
 

\begin{proof} (a) To  prove (2), let $B_0:=B(x_0,\a^2 R)$,    
and let us first consider $h\in \H_b^+(B)$ with $h(y_0)=1$ for
some point $y_0\in B_0$. Then
\begin{equation}\label{b-est}
h\le K \on B_0. 
\end{equation} 
Indeed, suppose that $h(x_1)>K$ for some $x_1\in B_0$.
Then, by Lemmas~\ref{sum-rj} and~\ref{main-lemma}, there exist points
$x_2,x_3,\dots$ in  $B(x_0,2\a^2 R)$ such that
 $h(x_j)>(1+\b)^{j-1} K$, $j\in\nat$.  This contradicts the boundedness of $h$.

 (b)  Next let $h$ be an arbitrary function in $\H_b^+(B)$. 
By Lemma~\ref{positive}, there exists $h_0\in \H_b^+(B)$  such that 
$h_0>0$ on $\ov B$. Let \hbox{$y\in B_0$} and $\ve>0$.
Applying (\ref{b-est}) to $ (h+\ve h_0)/(h+\ve h_0)(y)$ we get that
$h+\ve h_0\le K(h+\ve h_0)(y)$ on $B_0$. Thus (\ref{har-proof}) holds.

(c) Let us now consider 
an open neighborhood $W$ of $\ov B$ and   $h\in \tilde \H^+( W)$. By~ (\ref{hht}),
\begin{equation}\label{hnW} 
h_n:=H_W(h\wedge n)\in \H_b^+(W),
\end{equation} 
$n\in\nat$
(the relation $H_VH_W=H_W$ for open $V\subset \subset W$ is a special case of (\ref{it-bal})).  
By~(b), $\sup h_n(B_0)\le K\inf h_n(B_0)$. Clearly, $h_n\uparrow h$ as $n\to \infty$. So $h$~satisfies
(\ref{har-proof}).

(d)  To prove (1), let $U$ be an   open set in $X$, $h\in\tilde \H^+(U)$,
and $x_0\in U$. We choose $0<R<R_0\wedge R_1$ such that the closure of $ B=B(x_0, R)$
is a proper compact subset of  $U$. 
Let $W$ be a relatively compact open neighborhood of $\ov B$ in $U$ 
and let~$h_n$, $n\in\nat$, be as in (\ref{hnW}).
Then $h-h_n\in \tilde \H^+(W)$  for every $n\in\nat$,  and hence,
by (c), 
\[
         h-h_n \le K (h-h_n)(x_0) \on B_0=B(x_0,\a^2 R).
\]
So the
functions $h_n$ (which are continuous on $W$)  converge to $h$ uniformly on $B_0$.
Therefore $h|_{B_0}\in \C(B_0)$. Thus  $h|_U\in \C(U)$ completing the proof. 
\end{proof}

\begin{remark} \label{ess-local}{\rm
The preceding proofs show that, given $x_0\in X$ and  $R>0$ such that $\ov B(x_0,R)$
is a proper compact subset of $X$, we still obtain (\ref{har-proof}) with some $K\in (1,\infty)$,
which may depend on $x_0$ and $R$, provided  there exist $c_0, c_J\in (0,\infty) $ and $\a\in (0,1)$
such that (\ref{cap-R}) and (\ref{jump})   hold for all $x\in B(x_0,R)$ and $0<r<R$.
For an application see Section \ref{section-local}.
}
\end{remark}

\section{Sufficient conditions for Assumption \ref{final-ass}}\label{suff-ass}

For relatively compact open sets $V$ in $X$,   let $G_V$ denote  the associated Green function on $V$, that is,   
\[
          G_V(\cdot, y) :=G(\cdot ,y)-R_{G(\cdot,y)}^\vc, \qquad y\in V.
\]
We shall need the following simple statement.

\begin{lemma}\label{GGB}
There exists $0<\a<1/4$ such that, for all $y\in X$ and $0<r<R_0$,
\begin{equation}\label{GGM}
                G_{B(y,r)}(\cdot,y)\ge \frac 12 \, G(\cdot,y) \on B(y,2\a r).
\end{equation} 
\end{lemma}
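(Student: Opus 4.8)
The plan is to estimate $R_{G(\cdot,y)}^{B(y,r)^c}$ from above on the small ball $B(y,2\a r)$ using the upper bound on the reduced function of a ball from Proposition~\ref{Rge}, and then combine it with the lower bound $G(\cdot,y)\ge c\inv g(\rho(\cdot,y))$ to make the ratio small for a suitable choice of $\a$. The key observation is that, since $G(\cdot,y)$ is a potential harmonic on $X\setminus\{y\}$, we may write $R_{G(\cdot,y)}^{B(y,r)^c}$ as a balayage which is dominated by a multiple of $G(\cdot,x_z)$-type quantities coming from points $z$ near the sphere of radius $r$; more precisely, the cleanest route is to bound $R_{G(\cdot,y)}^{B(y,r)^c}$ on $B(y,2\a r)$ by a constant times $g(r)\cdot R_1^{B(y,r)^c}$-type expressions, exploiting that $G(\cdot,y)\le c\,g((1-\text{something})r)$ on the relevant region.

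First I would fix $x\in B(y,2\a r)$ and use $G_{B(y,r)}(x,y)=G(x,y)-R_{G(\cdot,y)}^{B(y,r)^c}(x)$, so that \eqref{GGM} is equivalent to $R_{G(\cdot,y)}^{B(y,r)^c}(x)\le \frac12 G(x,y)$. Since $G(x,y)\ge c\inv g(\rho(x,y))\ge c\inv g(2\a r)$, it suffices to show $R_{G(\cdot,y)}^{B(y,r)^c}(x)\le \frac{1}{2c}g(2\a r)$. For the reduced function on $B(y,r)^c$: because $B(y,r)^c$ is not thin at its points, $R_{G(\cdot,y)}^{B(y,r)^c}\in\W$, and by the domination principle it equals the balayage $\int G(\cdot,y)\,d\ve_x^{B(y,r)^c}$ as a function, i.e.\ $R_{G(\cdot,y)}^{B(y,r)^c}(x)=\int G(z,y)\,d\ve_x^{B(y,r)^c}(z)$. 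The measure $\ve_x^{B(y,r)^c}$ is supported in $\ov{B(y,r)}{}^c\cup(\text{part of }\partial)$; for $z$ with $\rho(z,y)\ge r$ one has $G(z,y)\le c\,g(\rho(z,y))\le c\,g(r)$ — but this only gives $R_{G(\cdot,y)}^{B(y,r)^c}(x)\le c\,g(r)\,R_1^{B(y,r)^c}(x)$, which is not yet small. To gain the needed factor, I would instead use that on $B(y,2\a r)$ one has, by Proposition~\ref{Rge} applied to the ball $B(y,2\a r)$ viewed from outside, or rather by a direct comparison, $R_1^{B(y,r)^c}(x)=P^x[T_{B(y,r)^c}<\infty]$, and since $x$ is deep inside $B(y,r)$ (distance $\ge (1-2\a)r$ to the boundary) this hitting probability is controlled by $c\,g((1-2\a)r)/g(r)\cdot(\text{capacity ratio})$; more cleanly, $R_1^{B(y,r)^c}(x)\le$ const via the lower bound \eqref{RB}: $R_1^{B(x,(1-2\a)r)}\ge (cc_0)\inv g(\rho(\cdot,x)+(1-2\a)r)/g((1-2\a)r)$ on all of $X$, and $B(x,(1-2\a)r)\subset B(y,r)$ forces $B(y,r)^c\subset B(x,(1-2\a)r)^c$, so $R_1^{B(y,r)^c}\le 1-R_1^{B(x,(1-2\a)r)}$ pointwise is false in general; the correct inequality is $R_1^{B(y,r)^c}(x)= \ve_x^{B(y,r)^c}(1)$ and the complement estimate. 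The honest approach is: bound the balayage by splitting $G(z,y)$ on the support of $\ve_x^{B(y,r)^c}$ into the region $r\le\rho(z,y)< r/\a$ (where $G(z,y)\le c\,g(r)$, mass controlled by the capacity estimate of Proposition~\ref{hit-A}/Proposition~\ref{Rge}) and the far region $\rho(z,y)\ge r/\a$ (where $G(z,y)\le c\,g(r/\a)\le \text{small}\cdot g(r)$ by doubling/weak decay).

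I expect the main obstacle to be making the first region's contribution genuinely small: the naive bound $R_{G(\cdot,y)}^{B(y,r)^c}(x)\le c\,g(r)\,R_1^{B(y,r)^c}(x)$ needs $R_1^{B(y,r)^c}(x)$ to be of order $g(2\a r)/g(r)$ on $B(y,2\a r)$, i.e.\ small, and this is precisely a "harmonic-measure decay" statement. I would obtain it from the lower estimate \eqref{RB} (equivalently Proposition~\ref{Rge}) applied to the ball $B(x,(1-2\a)r)$: since this ball is contained in $B(y,r)$, on its complement $R_1^{B(x,(1-2\a)r)}\le R_1^{B(y,r)^c}$ is \emph{not} the right direction, but $1-R_1^{B(y,r)^c}\ge$ (potential of the complement) — rather, I use $R_1^{B(y,r)^c}(x)\le c\,g(\rho(x,\cdot))/g((1-2\a)r)$-type bound from the upper part of Proposition~\ref{Rge}: $R_1^{B(x,s)}\le c^2 g(\rho(\cdot,x))/g(s)$, and on $B(y,r)^c$ we have $\rho(\cdot,x)\ge(1-2\a)r$, giving... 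Actually the slickest finish: by Remarks~\ref{Green-first}.4, choose $\a$ so small that $g(r)\le \frac{1}{4c^3}g(2\a r)$ for all $0<r<R_0$, and separately so that $g(r/\a)\le$ negligible; then since $R_{G(\cdot,y)}^{B(y,r)^c}(x)\le c\,g(r)$ (crudely, as $G(\cdot,y)\le c\,g(r)$ on $\ov{B(y,r)}{}^c$ and $R_1\le 1$) we get $R_{G(\cdot,y)}^{B(y,r)^c}(x)\le c\,g(r)\le \frac{1}{4c^2}g(2\a r)\le \frac12 c\inv g(2\a r)\le\frac12 G(x,y)$ — provided the support of $\ve_x^{B(y,r)^c}$ really avoids $\{\rho(\cdot,y)<r\}$ where $G(\cdot,y)$ blows up, which it does since that measure is carried by $B(y,r)^c$. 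So the one genuine subtlety is that $R_{G(\cdot,y)}^{B(y,r)^c}=\ve_x^{B(y,r)^c}(G(\cdot,y))$ with the measure supported off $B(y,r)$, hence $G(\cdot,y)\le c\,g(r)$ there; this uses that $G(\cdot,y)$ is a potential, that reduction on the (non-thin, indeed $F_\sigma$) set $B(y,r)^c$ is given by balayage, and \cite[VI.9.1]{BH} or the domination principle. I would write this out, choosing at the end a single $\a<1/4$ uniform in $y$ and $r$ via Remarks~\ref{Green-first}.4.
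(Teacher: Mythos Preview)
Your final paragraph (``Actually the slickest finish'') is exactly the paper's proof: since $G(\cdot,y)\le c\,g(r)$ on $B(y,r)^c$ and the constant $c\,g(r)$ lies in $\W$, one has $R_{G(\cdot,y)}^{B(y,r)^c}\le c\,g(r)$ globally; on $B(y,2\a r)$ one has $G(\cdot,y)\ge c^{-1}g(2\a r)$, and choosing $\a$ via Remarks~\ref{Green-first}.4 so that $c\,g(r)\le \tfrac12 c^{-1}g(2\a r)$ finishes it. All the preceding exploration---splitting into annuli, invoking Proposition~\ref{hit-A} or the capacity bound of Assumption~\ref{ex-c0}, worrying about the support of $\ve_x^{B(y,r)^c}$---is unnecessary and should be discarded; the paper's argument is the three-line one you landed on at the end, and it uses only Assumption~\ref{main-ass} (no need for Assumption~\ref{ex-c0}).
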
 

\begin{proof} Let $0<\a<1/4$ such that $g(r)\le (2c^2c_D)\inv g(\a r)$ for every $0<r<R_0$. 
Let  $y\in X$ and  $0<r<R_0$. Since $G(\cdot,y)\le c g(r) $ on ${B(y,r)^c}$, we obtain that
$R_{G(\cdot,y)}^{B(y,r)^c} \le c g(r)\le (2cc_D)\inv g(\a r) $, 
whereas $G(\cdot,y)\ge c\inv g(2\a r)\ge (c c_D)\inv g(\a r)$ on~$B(y,2\a r)$. So (\ref{GGM}) holds.
\end{proof} 
 
In this section, let us assume the following estimate of Ikeda-Watanabe type,  
which by \cite[Example 1 and Theorem 1]{ikeda-watanabe} holds, with $C_N=1$ and    on $X\setminus \ov B(x,r)$,
for all (temporally homogeneous) L\'evy processes.

\begin{assumption}\label{7.1}
There exist a measure $\lambda$ on $X$, a kernel $N$ on $X$, $ M_N\ge 3$, and $C_N\ge 1$ such
that, for all $x\in X$ and $0<r<R_0$, 
\begin{equation}\label{ike-wat}
C_N\inv   \vx^{B(x,r)^c}\le \int G_{B(x,r)}(x,z)N(z,\cdot)\,d\lambda(z)\le C_N \vx^{B(x,r)^c}      \on B(x,M_N r)^c.
\end{equation} 
\end{assumption}

\begin{proposition}\label{suff-i}
Suppose that  there exist   $C\ge 1$, $a\ge 3$ and $R>0$ such that, for all $x\in X$, $0<r<R$ and $y\in B(x,r)$,  
\begin{equation}\label{Nxy}
                        N(x,\cdot)\le C N(y,\cdot) \on B(y,ar)^c
\end{equation} 
 and 
\begin{equation}\label{int-xy}
  \int_{B(x,r)} g(\rho(x,z))\,d\lambda(z)\le C\int_{B(y,2r)}   g(\rho(y,z))\,d\lambda(z).
\end{equation} 
Then Assumption \ref{final-ass} is satisfied.
\end{proposition}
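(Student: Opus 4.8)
The goal is to derive the jump comparison \eqref{jump}, i.e.\ for suitable $\a$, $x\in X$, $0<r<R_1$ and $y\in B(x,\a r)$,
\[
\vx^{B(x,\a r)^c}\le c_J\,\vy^{B(y,r)^c}\on B(y,r)^c,
\]
from the Ikeda--Watanabe representation \eqref{ike-wat} together with the local comparison hypotheses \eqref{Nxy} and \eqref{int-xy} on the kernel $N$ and the measure $\lambda$. The overall strategy is to express both sides of the desired inequality through the Ikeda--Watanabe formula, transfer the comparison from $\vx^{B(x,\a r)^c}$ to the integral $\int G_{B(x,\a r)}(x,z)N(z,\cdot)\,d\lambda(z)$ and back again, and in between estimate $G_{B(x,\a r)}(x,\cdot)$ from above by $G(\cdot,x)\approx g\circ\rho$ and $G_{B(y,r)}(y,\cdot)$ from below by $\tfrac12 G(\cdot,y)$ on $B(y,2\a' r)$ via Lemma~\ref{GGB}.

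\textbf{Key steps.} First I would fix $\a\in(0,1/4)$ small enough to satisfy simultaneously: the conclusion of Lemma~\ref{GGB} (call that threshold $\a_1$, so $G_{B(y,r)}(\cdot,y)\ge\tfrac12 G(\cdot,y)$ on $B(y,2\a_1 r)$), the requirement $M_N\cdot(\text{inner scale})\le(\text{outer scale})$ coming from \eqref{ike-wat}, and the constraint $a\cdot(\text{inner scale})\le(\text{outer scale})$ coming from \eqref{Nxy}; since $M_N,a\ge 3$ are fixed constants, taking $\a$ a suitable negative power of $2$ arranges all of this, and one shrinks $R_1$ to $R_0\wedge R$ if necessary. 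Next, working on the set $B(y,r)^c$ (which, after possibly shrinking $\a$, lies inside the appropriate ``outside'' regions for all three balls in play), I would chain the estimates: by the upper half of \eqref{ike-wat} applied to the ball $B(x,\a r)$,
\[
\vx^{B(x,\a r)^c}\le C_N\int G_{B(x,\a r)}(x,z)\,N(z,\cdot)\,d\lambda(z)\le C_N\int_{B(x,\a r)} c\,g(\rho(x,z))\,N(z,\cdot)\,d\lambda(z),
\]
using $G_{B(x,\a r)}(x,\cdot)\le G(\cdot,x)\le c\,g\circ\rho$ and that the integrand is supported in $B(x,\a r)$ because $G_V(\cdot,y)$ vanishes off $V$. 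Then, for each $z\in B(x,\a r)\subset B(y,2\a r)$ I would replace $N(z,\cdot)$ on $B(y,r)^c$ by $CN(y,\cdot)$ via \eqref{Nxy} (with $x,y$ there taken to be $z,y$ and radius $2\a r$, which is legitimate once $\a$ is small enough that $B(y,r)^c\subset B(y,a\cdot 2\a r)^c$), and pull $N(y,\cdot)$ out of the integral; this gives the bound
\[
\vx^{B(x,\a r)^c}\le c\,C_N\,C\Bigl(\int_{B(x,\a r)} g(\rho(x,z))\,d\lambda(z)\Bigr)N(y,\cdot)\on B(y,r)^c.
\]

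\textbf{Closing the loop.} Now I would invoke \eqref{int-xy} (with radius $\a r$) to dominate $\int_{B(x,\a r)} g(\rho(x,z))\,d\lambda(z)$ by $C\int_{B(y,2\a r)} g(\rho(y,z))\,d\lambda(z)$, and then compare the latter to a lower estimate for $\int G_{B(y,r)}(y,z)N(z,\cdot)\,d\lambda(z)$: on $B(y,2\a r)\subset B(y,2\a_1 r)$ Lemma~\ref{GGB} gives $G_{B(y,r)}(y,z)\ge\tfrac12 G(z,y)\ge\tfrac1{2c}g(\rho(y,z))$, so
\[
\int_{B(y,2\a r)} g(\rho(y,z))\,d\lambda(z)\le 2c\int G_{B(y,r)}(y,z)\,d\lambda(z),
\]
but I actually need this weighted by $N(z,\cdot)$, not by $d\lambda(z)$ alone. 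The clean way around this is to restrict from the start the integral over $B(y,r)$ in the Ikeda--Watanabe lower bound for $\vy^{B(y,r)^c}$ to the subset $B(y,2\a r)$ and use $N(z,\cdot)\ge C^{-1}N(y,\cdot)$ on $B(y,r)^c$ there (again \eqref{Nxy}, this time as a lower bound, valid for $z\in B(y,2\a r)$ once $\a$ is small): then
\[
\vy^{B(y,r)^c}\ge C_N^{-1}\int_{B(y,2\a r)} G_{B(y,r)}(y,z)\,N(z,\cdot)\,d\lambda(z)\ge \frac{1}{2cC_NC}\Bigl(\int_{B(y,2\a r)} g(\rho(y,z))\,d\lambda(z)\Bigr)N(y,\cdot)\on B(y,r)^c.
\]
Combining the two displayed chains yields \eqref{jump} with $c_J:=2c^2C_N^2C^3$ (up to the exact bookkeeping of constants), which is the content of Assumption~\ref{final-ass}.

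\textbf{Main obstacle.} The delicate point is purely geometric-combinatorial: one must choose a single $\a$ (and a single $R_1$) for which \emph{all} the set inclusions needed above hold simultaneously --- that $B(x,\a r)\subset B(y,2\a r)$, that $B(y,r)^c$ sits outside $B(y,a\cdot 2\a r)$ and outside the $M_N$-dilated balls appearing in \eqref{ike-wat} for both $B(x,\a r)$ and $B(y,r)$, and that $2\a r$ stays below the threshold $\a_1 r$ of Lemma~\ref{GGB} and below the radius $R$ in \eqref{Nxy}--\eqref{int-xy}. None of this is hard, but it requires care because $\a$ must be fixed \emph{before} $r$, uniformly in $x,y,r$, and the constants $M_N\ge 3$, $a\ge 3$, $c_D$ feeding into $\a_1$ all pull in the same direction; taking $\a=2^{-N}$ for $N$ large enough (depending only on $M_N$, $a$, $c$, $c_D$) resolves it. A secondary subtlety is making sure the restriction of the lower Ikeda--Watanabe bound to the subdomain $B(y,2\a r)$ of $B(y,r)$ is legitimate --- it is, since $G_{B(y,r)}(y,\cdot)\ge 0$ and the measure $\lambda$ and kernel $N$ are positive, so discarding part of the integral only decreases it.
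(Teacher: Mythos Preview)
Your argument is essentially the paper's: upper Ikeda--Watanabe for the small ball, bound $G_{B(x,\a r)}(x,\cdot)\le c\,g(\rho(x,\cdot))$, replace $N(z,\cdot)$ by $N(y,\cdot)$ via \eqref{Nxy}, transfer the $g$-integral from center $x$ to center $y$ via \eqref{int-xy}, rebuild the Green function of the outer ball via Lemma~\ref{GGB}, reinstate $N(z,\cdot)$ via \eqref{Nxy}, and close with the Ikeda--Watanabe bound for the outer ball. The paper organises the scales slightly differently (inner radius $r$, outer ball $B(y,Mr)$ with $M\ge M_N\vee(a+2)$, so that $\a=1/M$), but the substance is identical and your constant $c_J=2c^2C_N^2C^3$ matches the paper's.

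There is, however, one slip in your bookkeeping. Among the inclusions you claim to arrange by taking $\a$ small is that ``$B(y,r)^c$ sits outside the $M_N$-dilated ball for $B(y,r)$'', i.e.\ $B(y,r)^c\subset B(y,M_N r)^c$; since $M_N\ge 3$ this is impossible for every $\a$. Concretely, the Ikeda--Watanabe bound applied to the outer ball $B(y,r)$ is available only on $B(y,M_N r)^c$, so your chain proves \eqref{jump} there rather than on all of $B(y,r)^c$. The repair is immediate: on $B(y,M_N r)^c$ one has $\vy^{B(y,r)^c}\le\vy^{B(y,M_N r)^c}$ by \eqref{it-bal}, so with $s:=M_N r$ and $\a':=\a/M_N$ you obtain $\vx^{B(x,\a's)^c}\le c_J\,\vy^{B(y,s)^c}$ on $B(y,s)^c$, which is exactly Assumption~\ref{final-ass} with parameter $\a'$.
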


\begin{proof}    Let $M\ge M_N\vee (a+2)$ such that Lemma \ref{GGB} holds with $\a:=1/M$.    
Let $x\in X$, $0<r<\a (R\wedge R_0)$, $y\in B(x,r)$, and let  $E$ be a~Borel measurable set in~$B(y,M r)^c$. 
If $z\in B(y,2 r)$, then $E \subset  B(z, ar)^c$ and, by (\ref{Nxy}),
\[
      C \inv N(z,E)\le N(y,E)\le C N(z,E).
\]
Since $B(x,r)\subset B(y,2r)$, we obtain that 
\begin{eqnarray*} 
 \vx^{B(x,r)^c}(E)&\le& C_N \int G_{B(x,r)}(x,z) N(z,E)\,d\lambda(z)\\
&\le& cCC_N N(y,E)\int_{B(x,r)} g(\rho(x,z))  \,d\lambda(z)\\
&\le &c C^2C_N  N(y,E) \int_{B(y,2r)} g(\rho(y,z))  \,d\lambda(z)\\
&\le &2c^2 C^3C_N \int  G_{B(y,Mr)}(y,z) N(z,E) \,d\lambda(z) \\
&\le&  2c^2 C^3 C_N^2 \, \vy^{B(y,Mr)^c}(E). 
\end{eqnarray*} 
Thus Assumption \ref{jump} holds taking $R_1:=R\wedge R_0$. 
\end{proof} 

For simplicity, let us now assume that $X=\reald$, $\rho(x,y)=|x-y|$, 
the measure $\lambda$ in Assumption \ref{7.1} is Lebesgue measure
(a case, where clearly (\ref{int-xy}) holds) and that there exists a constant $C_G\ge 1$
such that
\begin{equation}\label{lambda-g}
      G\lambda_{B(x,r)}\le C_G g(r), \qquad \mbox{$ x\in X$, $0<r<R_0$}.
\end{equation} 
We might recall that (\ref{lambda-g}) implies that Assumption \ref{ex-c0} is satisfied
(see \cite[(1.14)]{H-liouville-wiener}).

\begin{proposition}\label{suff-ii}
Suppose that there exist a measure $\tilde \lambda$ on $\reald$, a function $n\colon \reald\times \reald\to [0,\infty)$ 
and $C\ge 1$, $a\ge 3$ such that $N(y,\cdot)=n(y,\cdot)\tilde\lambda$ for every  $y\in X$ and,
for all $x\in X$, $0<r<R_0$,  $y\in B(x,r)$ and $\tilde z\in B(x, ar)^c$,
\begin{equation}\label{decreasing}
            n(x,\tilde z)\le C n(y,\tilde z) \qquad\mbox{ provided }  |x-\tilde z|\ge |y-\tilde z|.
\end{equation} 
Then Assumption \ref{final-ass} is satisfied. 
\end{proposition}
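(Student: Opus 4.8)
The natural plan is to deduce Proposition~\ref{suff-ii} from Proposition~\ref{suff-i}, so the task is to check its two hypotheses. Hypothesis~(\ref{int-xy}) is automatic here: since $\lambda$ is Lebesgue measure, $\int_{B(x,r)}g(|x-z|)\,d\lambda(z)=\int_{B(0,r)}g(|u|)\,du$ is independent of $x$ and, by $g\ge0$ and the inclusion $B(0,r)\subseteq B(0,2r)$, is dominated by $\int_{B(y,2r)}g(|y-z|)\,d\lambda(z)$, so~(\ref{int-xy}) holds with $C=1$; and (\ref{lambda-g}) has already been recorded to imply Assumption~\ref{ex-c0}. Thus everything comes down to verifying~(\ref{Nxy}), i.e.\ producing $a'\ge 3$, $C'\ge 1$ and $R>0$ such that $N(x,\cdot)\le C'N(y,\cdot)$ on $B(y,a'r)^c$ for all $x\in X$, $0<r<R$ and $y\in B(x,r)$. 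Because $N(x,\cdot)=n(x,\cdot)\tilde\lambda$ and $N(y,\cdot)=n(y,\cdot)\tilde\lambda$, this is the pointwise ($\tilde\lambda$-a.e.) statement
\[
n(x,\tilde z)\le C'\,n(y,\tilde z)\qquad\text{for }\tilde z\in B(y,a'r)^c.
\]

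So fix $x$, $0<r<R$ (with $R$ a small fixed fraction of $R_0$), $y\in B(x,r)$, and $\tilde z\in B(y,a'r)^c$ with $a'$ a large fixed multiple of $a$, so that in particular $|x-\tilde z|$ and $|y-\tilde z|$ exceed any multiple of $r$ we shall need. If $|x-\tilde z|\ge|y-\tilde z|$, then~(\ref{decreasing}) applies verbatim and gives $n(x,\tilde z)\le C\,n(y,\tilde z)$; the only genuine work is the complementary case
\[
|x-\tilde z|<|y-\tilde z|,
\]
in which $x$ lies \emph{closer} to $\tilde z$ than $y$. Here I would introduce the reflection $y':=2x-y$ of $y$ in $x$, so that $|y'-x|=|x-y|<r$ and hence $y'\in B(x,r)$, and use the median identity $|y-\tilde z|^2+|y'-\tilde z|^2=2|x-\tilde z|^2+2|x-y|^2$, which forces both $y$ and $y'$ to lie within $|x-y|$ of the sphere $\{w:|w-\tilde z|=|x-\tilde z|\}$. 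The aim is then to reduce to the case already settled: move each of $x,y$ radially (with respect to $\tilde z$) by at most $|x-y|$ onto that sphere and back, applying~(\ref{decreasing}) at each step, and combine this with one application to the pair $(x,y')$, so as to obtain $n(x,\tilde z)\le C''\,n(y,\tilde z)$ with $C''$ a fixed power of $C$. Setting $C':=C''$, Proposition~\ref{suff-i} would then give Assumption~\ref{final-ass} with $R_1:=R\wedge R_0$.

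I expect this complementary case to be where essentially all the difficulty sits, and to be more delicate than the one-line description above suggests: (\ref{decreasing}) is a one-sided (monotonicity) hypothesis, so a naive chain through an intermediate point at distance exactly $|x-\tilde z|$ from $\tilde z$ produces a \emph{lower} bound on $n(x,\tilde z)$, not the upper bound one wants. Getting the direction right requires either a genuine two-step reflection argument (using $y'=2x-y$ \emph{together with} a point on the far side of $\tilde z$) or, failing a clean pointwise inequality, running the estimate inside the Ikeda--Watanabe representation: writing $\vx^{B(x,\a r)^c}(E)\le C_N\int G_{B(x,\a r)}(x,z)N(z,E)\,d\lambda(z)$, keeping $N(z,\cdot)$ under the integral, bounding $G_{B(x,\a r)}(x,z)\le c\,g(|x-z|)$, bounding the companion integral for $\vy^{B(y,r)^c}$ from below by $G_{B(y,r)}(y,z)\ge\tfrac12 G(y,z)$ on a ball around $y$ via Lemma~\ref{GGB}, and exploiting the reflection invariance of Lebesgue measure $\lambda$ to absorb the part of the $z$-integral where $z$ lies on the $\tilde z$-side of $x$. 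Once this case is handled, what remains is the bookkeeping of constants exactly as in the proof of Proposition~\ref{suff-i}, together with the finiteness of $\int_{B(x,r)}g(|x-z|)\,d\lambda(z)$ furnished by~(\ref{lambda-g}) — which, for $g$ decreasing, moreover forces $g(r)r^d\to0$ as $r\to0$, so that the contribution of $z$ near $x$ is harmless.
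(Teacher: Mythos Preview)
Your primary plan --- reduce to Proposition~\ref{suff-i} by verifying the pointwise bound~(\ref{Nxy}) --- cannot work, and the obstruction is exactly the one you name. From the one-sided hypothesis~(\ref{decreasing}) alone there is no way to obtain $n(x,\tilde z)\le C' n(y,\tilde z)$ when $|x-\tilde z|<|y-\tilde z|$. Your reflection point $y'=2x-y$ does not help: if $|y'-\tilde z|\le|x-\tilde z|$, then~(\ref{decreasing}) gives $n(x,\tilde z)\le Cn(y',\tilde z)$, but passing from $y'$ to $y$ runs~(\ref{decreasing}) in the wrong direction again, and no chain of applications can reverse a pure monotonicity assumption. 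Indeed, for $n(w,\tilde z)=f(|w-\tilde z|)$ with $f$ decreasing,~(\ref{decreasing}) holds with $C=1$, while~(\ref{Nxy}) would force a doubling-type control on $f$ that is simply not assumed here (compare Corollary~\ref{iso}, which \emph{does} assume it). So the reduction to Proposition~\ref{suff-i} is a dead end, not merely delicate.

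Your fallback --- work directly inside the Ikeda--Watanabe representation --- is the correct route and is what the paper does, but you have not located the key step. The issue is not ``the $\tilde z$-side of $x$''; it is the region $B:=B(x,r/2)$ where $g(|x-z|)$ blows up. The decisive trick is to introduce the shifted ball $B':=B(x',r/4)$ with $x':=x+\tfrac{3}{4}\,\dfrac{\tilde z-x}{|\tilde z-x|}\,r$, so that $B'\subset B(x,r)\setminus B$. For every $z\in B$ and $z'\in B'$ one has $|z-\tilde z|\ge |z'-\tilde z|$, so~(\ref{decreasing}) applies in the \emph{right} direction to the pair $(z,z')$, giving $n(z,\tilde z)\le C\,n(z',\tilde z)$. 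Averaging over $z'\in B'$ replaces $n(z,\tilde z)$ by $\tfrac{2^dC}{\lambda(B)}\int_{B'}n(z',\tilde z)\,d\lambda(z')$, and then~(\ref{lambda-g}) disposes of $\int_B g(|x-z|)\,d\lambda(z)$. This yields the key estimate
\[
\int_{B(x,r)} g(|x-z|)\,n(z,\tilde z)\,d\lambda(z)\ \le\ C'\,g(r)\int_{B(x,r)} n(z,\tilde z)\,d\lambda(z),
\]
after which the comparison of $\vx^{B(x,r)^c}$ and $\vy^{B(y,Mr)^c}$ via Assumption~\ref{7.1} and Lemma~\ref{GGB} is routine. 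The point is that~(\ref{decreasing}) is never asked to compare $n(x,\cdot)$ with $n(y,\cdot)$; it is used only on the integration variable $z$, always moving $z$ \emph{towards} $\tilde z$.
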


\begin{proof}[Proof {\rm (cf.\ the proof of {\cite[Proposition 6]{grzywny}})}]  
Again,  let $M\ge M_N\vee (a+2)$ such that Lemma \ref{GGB} holds with $\a:=1/M$,    
let $x\in X$, $0<r<\a (R\wedge R_0)$, $y\in B(x,r)$, and let  $E$ be a~Borel measurable set in~$B(y,M r)^c$. 
 By~(\ref{ike-wat}),
\[
\vx^{B(x,r)^c}(E)\le cC_N \int_{B(x,r)}\int_E
g(|x-z|)n(z,\tilde z)\,d\tilde \lambda(\tilde z)\,d\lambda(z).
\]
Similarly, since $B(x,r)\subset B(y,2r)$ and $|y-z|\le 2r$ for every $z\in B(x,r)$, 
\begin{eqnarray*} 
\vy^{B(y,Mr)^c}(E) &\ge& C_N\inv \int G_{B(y,M r)}(y,z)N(z,E)\,d\lambda(z)\\
     &\ge&  (2C_N) \inv  \int_{B(x,r)} G(y,z) N(z,E)\,d\lambda(z)\\
&\ge& (2cc_DC_N)\inv  g(r)\int_{B(x,r)}\int_E n(z,\tilde z)\,d\tilde
     \lambda(\tilde z) \, d\lambda(z).
\end{eqnarray*} 
Hence it will be sufficient to show that, for every $\tilde z\in B(y,Mr)^c$,
\begin{equation}\label{suff-tilde}
\int_{B(x,r)} g(|x-z|)n(z,\tilde z)\,d\lambda(z) \le C' g(r) \int_{B(x,r)}
n(z,\tilde z) \, d\lambda(z)
\end{equation} 
(with some constant $C'>0$). So let $\tilde z\in B(y,Mr)^c$.

Let $B:=B(x,r/2)$. Since $g(|x-z|)\le g(r/2)\le c_D g(r)$ for every  $z\in B^c$,  
\[ 
\int_{B(x,r)\setminus B} g(|x-z|)n(z,\tilde z)\,d\lambda(z) \le  c_D g(r)\int_{B(x,r)}
n(z,\tilde z) \, d\lambda(z).
\]
Moreover, let
\[
          x':=x+\frac 3 4 \, \frac {\tilde z-x}{|\tilde z-x|}\, r \und  B':=B(x',r/4),
\] 
so that $B'\subset B(x,r)\setminus B$. 
If $z\in B$ and $z'\in B'$, then $|z-\tilde z|\ge |z'-\tilde z|$, 
and therefore, by (\ref{decreasing}),
\[
n(z,\tilde z)\le \frac C{ \lambda(B')} \int_{B'} n(z',\tilde z)\, d  \lambda(z')=\frac {2^d C}{ \lambda(B)} \int_{B'} n(z',\tilde z)\, d  \lambda(z').
\]
Hence
\begin{multline*} 
 \int_B g(|x-z|)n(z,\tilde z)\,d\lambda(z)\\
 \le  2^d C\left(\int_{B'} n(z',\tilde z)\, d  \lambda(z')\right)\cdot
\left(\frac 1{\lambda(B)}\int_B g(|x-z|)\,d\lambda(z)\right),
\end{multline*} 
where
\[\frac1{\lambda(B )} 
\int_Bg(|x-z|)\,d\lambda(z)\le cG\lambda_B(x)\le  cC_G g(r/2) \le cc_DC_G g(r). 
\]
Thus (\ref{suff-tilde}) holds with $C':=c_D(1+2^dcCC_G)$. 
\end{proof} 

If $y\in B(x,r)$  and $\tilde z\in B(x,2r)^c$, then $|x-\tilde z| \le 2|x-y|+2|y-\tilde z|-|x-\tilde z|<2|y-\tilde z|$.
Hence we have the following result.

\begin{corollary}\label{iso}
Suppose that there exists a measure $\tilde \lambda$ on $\reald$ 
   such that $N(y,\cdot)=n(y,\cdot)\tilde\lambda$, $y\in X$, where
 $n(x,y)\approx n_0(|x-y|)$, and  that there exists  $C_0\ge 1$ such that   
\begin{equation}\label{weak-decreasing}
                         n_0(s) \le C_0  n_0(r), \qquad\mbox{ whenever } 0<r<s<2r.
\end{equation} 
Then Assumption \ref{final-ass} holds.
\end{corollary}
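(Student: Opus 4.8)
The plan is to derive Corollary~\ref{iso} by verifying the hypotheses of Proposition~\ref{suff-ii} for the special kernel $N(y,\cdot)=n(y,\cdot)\tilde\lambda$ with $n(x,y)\approx n_0(|x-y|)$. The only thing that needs checking is condition~\eqref{decreasing}: we must produce constants $C\ge 1$ and $a\ge 3$ such that, for all $x$, $0<r<R_0$, $y\in B(x,r)$ and $\tilde z\in B(x,ar)^c$,
\[
 n(x,\tilde z)\le C\, n(y,\tilde z)\qquad\text{whenever } |x-\tilde z|\ge |y-\tilde z|.
\]
So first I would fix the constants. The reduced constant $M_N$ in Assumption~\ref{7.1} forces $a\ge 3$; I would simply take $a:=3$, since the remark just before Corollary~\ref{iso} (``If $y\in B(x,r)$ and $\tilde z\in B(x,2r)^c$, then $|x-\tilde z|<2|y-\tilde z|$'') already shows that for $\tilde z\in B(x,2r)^c$ one has $|x-\tilde z|< 2|y-\tilde z|$, and \emph{a fortiori} this holds for $\tilde z\in B(x,3r)^c$.

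Next comes the heart of the argument, which is a short geometric observation. Suppose $|x-\tilde z|\ge|y-\tilde z|$. Writing $r':=|y-\tilde z|$ and $s':=|x-\tilde z|$ we have $0<r'\le s'$, and by the displayed remark $s'<2r'$; hence $0<r'\le s'<2r'$. Now $n(x,\tilde z)\approx n_0(s')$ and $n(y,\tilde z)\approx n_0(r')$ with comparison constant, say, $c_n\ge 1$ coming from the relation $n\approx n_0\circ\rho$. Apply~\eqref{weak-decreasing}: if $r'<s'$ it gives $n_0(s')\le C_0 n_0(r')$, and if $r'=s'$ this is trivial with $C_0$. Therefore
\[
 n(x,\tilde z)\le c_n\, n_0(s')\le c_n C_0\, n_0(r')\le c_n^2 C_0\, n(y,\tilde z),
\]
so~\eqref{decreasing} holds with $C:=c_n^2 C_0$ and $a:=3$. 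Having verified both hypotheses of Proposition~\ref{suff-ii} (the measure $\tilde\lambda$ is exactly the one in the statement, and~\eqref{decreasing} is now established), Assumption~\ref{final-ass} follows from that proposition, which completes the proof.

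I do not anticipate a genuine obstacle here; the only thing to be careful about is the chain of comparison constants — one must keep track of the constant hidden in $n\approx n_0\circ\rho$ on both ends and make sure the case $r'=s'$ (and the degenerate case where $n_0$ could be infinite or zero, which is excluded since $n_0$ takes values in a suitable range and $\tilde z\notin\{x,y\}$ because $\tilde z\in B(x,3r)^c$) is handled. The substantive content is entirely the one-line geometric fact $|x-\tilde z|<2|y-\tilde z|$ on $B(x,2r)^c$, which is already recorded in the paragraph preceding the corollary, so the proof is essentially a two-line reduction to Proposition~\ref{suff-ii}.
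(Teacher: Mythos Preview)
Your proposal is correct and follows exactly the paper's intended route: the sentence preceding the corollary provides the geometric inequality $|x-\tilde z|<2|y-\tilde z|$ for $y\in B(x,r)$ and $\tilde z\in B(x,2r)^c$, and you use it together with \eqref{weak-decreasing} and $n\approx n_0\circ|\cdot|$ to verify condition~\eqref{decreasing} of Proposition~\ref{suff-ii} with $a=3$ and $C=c_n^2C_0$. This is precisely the reduction the paper has in mind.
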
 

Thus  rather general L\'evy processes may serve as examples for our
approach (see, for example, \cite{grzywny, kim-mimica,
mimica-harnack,  mimica-harmonic, 
rao-song-vondracek, sikic-song-vondracek}).

\section{Harnack inequalities under intrinsic assumptions}\label{section-local}

In this section, 
let us assume that $(X,\W)$ is a balayage space, $1\in \W$, and that we have a Borel measurable function
 $G\colon X\times X\to (0,\infty]$ such that  $G=\infty$ on the diagonal
and the following holds:
\begin{itemize} 
\item [\rm (i)] 
   For every $y\in X$,  $ G(\cdot,y)$ is a potential which is harmonic on $X\setminus \{y\}$.   
\item [\rm (ii)] 
For every potential $p$ on $X$, there exists a~measure $\mu$ on $X$    such that 
\[
p=G\mu:=\int G(\cdot,y)\,d\mu(y).
\]
\end{itemize} 
Moreover, we assume             that there is a~function             
  $w \in \W\cap \C(X)$, $0<w\le 1$,  such that  each function $G(\cdot,x)/w$, $x\in X$,  is  bounded at infinity
and $G$ has the \emph{$(w,w)$-triangle property} (see \cite{H-uniform}), 
that is, for some constant  $C_w >1$, the function  
\[
\tilde G\colon (x,y)\mapsto G(x,y)/(w(x)w(y))
\] 
 satisfies
\begin{equation}\label{ww-triangle}
   \tilde G(x,z)\wedge \tilde G(y,z)\le C_w  \tilde G(x,y), \qquad x,y,z\in X.
\end{equation} 

For $x\in X$ and $r>0$, we define open neighborhoods $V(x,r)$ of $x$ by
\begin{equation}\label{vxr}
              V(x,r):=\{  G(\cdot,x)>1/r\}.
\end{equation} 
We intend to prove the following result (where $\tilde \H^+(U)$ has the same meaning as in Section 1).

\begin{theorem}\label{harnack-general}
Let $U$ be an      
open set which is covered by open sets $V$ having the following property: There are real numbers
$R_1\in (0,\infty]$,  $C, c_J\in (1,\infty)$ 
and $\a\in (0,1)$ {\rm(}which may depend on $V${\rm)} such that, for all $0<r<R_1$ and $x\in V$, 
\begin{equation}\label{RVr}
     R_1^{V(x,r) }\ge C\inv   G(\cdot,x)  r \on V(x,r)^c
\end{equation} 
and, for all $y\in V(x,\a r)$,
\begin{equation}\label{v-jump}
\vx^{V(x, \a r)^c}\le c_J \vy^{V(y,r)^c} \on V(y,r)^c.
\end{equation} 
 
Then
$\tilde \H^+(U)=\H^+(U)$ and, for every $x\in U$, there exists a compact neighborhood~$L$ of~$x$ in~$U$
and a constant $K\ge 1$ such that 
\begin{equation}\label{har-allgemein}
\sup h(L) \le K \inf h(L)\qquad \mbox{ for every }h\in \H^+(U).
\end{equation} 
\end{theorem}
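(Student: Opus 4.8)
The strategy is to reduce Theorem~\ref{harnack-general} to the essentially local version of Theorem~\ref{harnack} recorded in Remark~\ref{ess-local}, by showing that the hypotheses \eqref{RVr} and \eqref{v-jump} stated in terms of the level sets $V(x,r)$ translate, on each member $V$ of the covering, into the metric hypotheses \eqref{cap-R} and \eqref{jump} for a suitable metric $\rho$ and comparison function $g$. The $(w,w)$-triangle property \eqref{ww-triangle} is precisely what is needed for this: by the theory developed in \cite{H-uniform}, the function $\tilde G = G/(w\otimes w)$ is comparable to $1/\hat\rho$ for a quasi-metric $\hat\rho$, and passing to a power one obtains a genuine metric $\rho$ on $X$ with $G(\cdot,y)\approx w(\cdot)w(y)\,g\circ\rho(\cdot,y)$ where $g(s)=1/s$ has the doubling property and weak upper decay (with $R_0=\infty$). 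First I would invoke this to fix $\rho$ and $g$, and observe that $V(x,r)=\{G(\cdot,x)>1/r\}$ is then squeezed between two metric balls $B(x,c_1 r w(x))\subset V(x,r)\subset B(x,c_2 r w(x))$ on the region of interest, since on a fixed relatively compact piece $w$ is bounded above and below.

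The second step is the translation of the capacity hypothesis. Using Proposition~\ref{Rge} together with the lower bound \eqref{RVr}, the equilibrium potential of $V(x,r)$ is bounded below by a constant multiple of $G(\cdot,x)r$ off $V(x,r)$; feeding this into the characterization of capacity via $R_1^B\ge c^{-1}\kap B\cdot g(\rho(\cdot,x)+r)$ and the sandwiching of $V(x,r)$ between metric balls gives $\kap B(x,s)\ge c_0^{-1} g(s)^{-1}$ for $s$ in the relevant range, i.e. Assumption~\ref{ex-c0} holds locally on $V$. Likewise \eqref{v-jump}, combined with Remark~\ref{ass-remarks} (the freedom to shrink $\a$ and the reformulation \eqref{jump-1}) and the comparison of $V(\cdot,r)$ with balls, yields Assumption~\ref{final-ass} in the local form \eqref{jump} with a possibly smaller $\a$. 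At this point the hypotheses of Remark~\ref{ess-local} are verified for every $x\in V$ with constants depending only on $V$, so for each $x_0\in V$ and each small enough $R$ with $\ov B(x_0,R)$ a proper compact subset of $X$ we get the scaling-invariant Harnack inequality \eqref{har-proof} on $B(x_0,\a^2 R)$ with a constant $K=K(x_0,R)$; setting $L:=\ov B(x,\a^2 R)$ for suitable $R$ gives \eqref{har-allgemein} for bounded harmonic functions, and the general case \eqref{har-allgemein} follows by the truncation argument $h_n=H_WG(\cdot,y)\wedge n$ exactly as in parts (b)--(d) of the proof of Theorem~\ref{harnack-complete}. The statement $\tilde\H^+(U)=\H^+(U)$ is obtained the same way, patching the local equality over the covering using the sheaf property.

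\textbf{Main obstacle.} The delicate point is the two-sided comparison $B(x,c_1 r w(x))\subset V(x,r)\subset B(x,c_2 r w(x))$ and, more precisely, ensuring that all constants — the metric comparison $c$, the doubling constant $c_D$, $c_0$, $c_J$, $\a$ — can be chosen \emph{uniformly over $x\in V$}, since Remark~\ref{ess-local} needs $c_0,c_J,\a$ independent of the center within $B(x_0,R)$. The factor $w(x)$ in the radius is harmless on a relatively compact $V$ because $w\in\C(X)$ is then bounded away from $0$ and $\infty$ there, but one must be careful that the comparison between $V(x,r)$ and a ball uses $G(\cdot,x)\approx w(\cdot)w(x)/\rho(\cdot,x)$ only for points $\cdot$ near $x$, which is exactly where the level set lives once $r$ is small; controlling the ``far'' part requires the boundedness of $G(\cdot,x)/w$ at infinity, assumed in the hypotheses. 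A secondary technical point is that $\hat\rho$ produced by the triangle property is only a quasi-metric, so one must quote the standard fact (Frink-type metrization, as in \cite{H-uniform}) that a power of it is bi-Lipschitz to a metric and check that doubling and weak upper decay survive this reparametrization — routine, but it has to be said.
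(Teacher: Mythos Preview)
Your outline has the right skeleton --- build a metric from the $(w,w)$-triangle property, sandwich the level sets $V(x,r)$ between metric balls, translate \eqref{RVr} and \eqref{v-jump} into the local forms of \eqref{cap-R} and \eqref{jump}, and invoke Remark~\ref{ess-local}. But you are missing the step that makes this reduction legitimate: the passage to the transformed balayage space $(X,\widetilde{\W})$ of \eqref{tilde-W}, with Green function $\tilde G$ and reduced measures $\tilde\ve_x^A=(w/w(x))\,\ve_x^A$ as in \eqref{bal-tilde}. Remark~\ref{ess-local} localizes Assumptions~\ref{ex-c0} and~\ref{final-ass}, but it does \emph{not} localize Assumption~\ref{main-ass}; all of Sections~3--6 run under the global two-sided bound $c^{-1}g\circ\rho\le G\le c\,g\circ\rho$. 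For the original $G$ you only have $G\approx w(\cdot)\,w(y)\,(g\circ\rho)(\cdot,y)$, and nothing in the hypotheses keeps $w$ bounded away from zero on all of~$X$, so the lower half of \eqref{Gg} fails globally. Absorbing $w$ into the constants ``on a relatively compact piece'' does not close the gap, because the proofs use the estimate at points outside that piece (e.g.\ the upper bound on $R_1^V(y)$ in Proposition~\ref{hit-A} for \emph{all} $y\notin B(x_0,r)$). The paper's remedy is to verify Assumption~\ref{main-ass} for $(X,\widetilde{\W})$ and~$\tilde G$, where $\tilde G\approx d^{-\gamma}$ holds \emph{globally} (so $g(r)=r^{-\gamma}$, not $g(r)=1/r$; your ``passing to a power'' forces the exponent), then translate \eqref{RVr} and \eqref{v-jump} via \eqref{red-f} and \eqref{bal-tilde} into local capacity and jump bounds for~$\widetilde{\W}$, apply Remark~\ref{ess-local} to $\tilde h=h/w$, and transfer back.

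There is also a slip in the capacity step. To extract a lower bound on the capacity of a ball from \eqref{RVr} you need the \emph{upper} estimate on the Green function: one fixes a point $z\in V\setminus B(x_0,2R)$, writes $\tilde R_1^{B'}(z)=\int \tilde G(z,\cdot)\,d\tilde\mu_{B'}\le c\,c_D\,g(d(z,x))\,\tkap\,B$, and compares with the lower bound $\tilde R_1^B(z)\ge a^3(c^2C)^{-1}g(d(z,x))/g(r)$ coming from \eqref{RVr}. The inequality $R_1^B\ge c^{-1}\kap B\cdot g(\rho(\cdot,x)+r)$ from Proposition~\ref{Rge} that you cite runs in the wrong direction for this purpose.
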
 

\begin{remarks}{\rm
1. Of course, similar properties as in Section \ref{suff-ass},    locally in~$x$,
will be sufficient for (\ref{v-jump}). 

2. If $U$ is arcwise connected, then  standard arguments show that, for every compact $L$  
in $U$, there exists $K\ge 1$ such that (\ref{har-allgemein}) holds.
}
\end{remarks}

For a proof of Theorem \ref{harnack-general}, let us first recall that, defining
\begin{equation}\label{tilde-W}
  \widetilde \W:=\{\frac uw\colon u\in \W\},
\end{equation} 
we have a balayage space $(X,\widetilde \W)$ such that $1\in \tilde\W$ and, for every positive function 
$f\ge 0$ on $X$,
\begin{equation}\label{red-f}
                                                  \tilde R_f:=\inf \{\tilde v\in \widetilde\W\colon \tilde v\ge  f\}
              = \frac 1w\, R_{fw}.
\end{equation} 
In particular, for all $x\in X$ and $A\subset X$, the reduced measure $\tilde \ve_x^A$ with respect to~$(X,\widetilde \W)$ is
\begin{equation}\label{bal-tilde}
                                   \tilde \ve_x^A=\frac w{w(x)} \, \vx^A.
\end{equation} 
Therefore a function $h$ is harmonic on $U$ with respect to $(X,\W)$ if and only if the function $h/w$
is harmonic with respect to $(X,\widetilde \W)$. 
Moreover, it is easily verified that $\tilde G$ is a~Green function for~$(X,\tilde\W)$:
  a~function $p$ on $X$ is a potential for $(X,\W)$ if and only if $p/w$
is a potential for~$(X,\widetilde\W)$. Clearly $(1/w)G\mu = \tilde G(w\mu)$ for every measure~$\mu$ on~$X$.

Since $\tilde G=\infty$ on the diagonal, (\ref{ww-triangle}) implies that $\tilde G(y,x)\le C \tilde G(x,y)$
and 
$                              \tilde \rho(x,y):= \tilde G(x,y)\inv + \tilde G(y,x)\inv$, $x,y\in X$,
defines a quasi-metric on $X$ which is equivalent to $\tilde G\inv$. By  \cite[Proposition 14.5]{heinonen}
(see also \cite[Proposition~6.1]{H-liouville-wiener}), there exists a metric~$d$ on~$X$
and $\g>0$ such that $\tilde \rho \approx d^{\g}$. So  there exists  $c\ge 1$ with
\begin{equation}\label{Gd}
          c\inv d^{-\g}\le \tilde G\le c d^{-\g}.
\end{equation} 
For $ x\in X$ and  $  r>0$, let 
\begin{equation}\label{bxr}
            B(x,r):=\{y\in X\colon d(y,x)<r\}. 
\end{equation} 
Clearly,   
\begin{equation}\label{BV}
B(x,r)\supset \bigl\{\tilde G(\cdot,x)>cr^{-\g}\bigr\}\supset   V(x,c\inv r^{\g}) .
\end{equation} 
Further, if $V$ is a relatively compact  neighborhood of $x$, then, by assumption, 
$  G(\cdot,x)/w$ is bounded on $X\setminus V$;
so there exists $M>0$ such that $\{\tilde G(\cdot,x)>M\}\subset V$, and hence
$B(x,(Mc)^{-1/\g}) \subset V$. 
Therefore $d$ is a metric for the topology of $X$.

Thus Assumption \ref{main-ass} is satisfied for $(X,\widetilde \W) $ and $\tilde G$ taking 
\[ 
\rho:=d,  \quad g(r):=r^{-\g}, \quad  R_0:=\infty, \quad c_D:=2^\g,\quad \eta_0:=\a_0^\g.
\]

\begin{proof}[Proof of Theorem \ref{harnack-general}]  
Let us fix $x_0\in U$, and let $V$ be a~relatively compact  open neighborhood of~$x_0$ in $U$ 
(with corresponding $R_1,c_J,\a$) having the properties
stated in Theorem~\ref{harnack-general}. 
We choose $0<R\le R_1\wedge (c R_1)^{1/\g}$ 
such that $B(x_0,2R)$ is a~proper subset of $V$, 
and define $a,\b\in(0,1)$ by 
\begin{equation}\label{abb}
a:=\inf w(V)  \und  \b:=(a/c)^{2/\g}.  
\end{equation} 

Let $x\in B(x_0,R)$, $0<r<R$, $B:=B(x,r)$, $\tilde r:=c\inv r^\g$. Then
\begin{equation}\label{BVB}
       B(x,  \b r)\subset   V(x,\tilde r)\subset B\subset V.
\end{equation} 
Indeed, of course, $B\subset V$, and, by (\ref{BV}), $ V(x,\tilde r)\subset B$.
And $B(x,\b r)$ is contained in $V(x,\tilde r)$, since, for every $y\in B(x,\b r)\subset B\subset V$,
\[
G(y,x)\ge a^2\tilde G(y,x)\ge  a^2 c\inv d(x,y)^{-\g}>  a^2c\inv  (\b r)^{-\g}=1/\tilde r.    
\]
Since $w\le 1$ and $\tilde r<R_1$, we see, by (\ref{red-f}), (\ref{BVB}), and (\ref{RVr}), that
\begin{equation}\label{Rww}
 \tilde R_1^{B} =\frac 1w\,R_{w}^{B}   \ge  a R_1^{V(x,\tilde r)} \ge   a C\inv  G(\cdot, x) \tilde r 
\on V(x,\tilde r)^c. 
\end{equation} 
In particular, fixing $z\in V\setminus B(x_0,2R)$, we have
\[
\tilde R_1^{B} (z) \ge a^3 C\inv \tilde G(z,x) \tilde r\ge a^3 (c^2C)\inv   g(d(z,x)) / g(r).
\]
Given  $\ve>0$, there is
   $0<r'<r$ such that $B':=B(x,r')$ satisfies 
$\tilde R_1^{B'}(z)+\ve >\tilde R_1^{B}(z)$, where (denoting the capacity  of $B$ with respect to $\tilde G$ by $\tkap\,B$)
\[
     \tilde  R_1^{B'} (z) =\int \tilde G(z,y)\,d\tilde \mu_{B'} (y)\le  c g(d(z,x)/2)  \|\tilde \mu_{B'}\|\le cc_D g(d(z,x))\tkap\, B,
\] 
since $d(z,x)/2\le d(z,x) -r< d(z,\cdot)$ on $B$  (cf.\ the proof of   \cite[Proposition 1.10,b]{H-liouville-wiener}). 
So 
\[
       \tkap\, B\ge a^3 (c^3c_DC)\inv  g(r)\inv. 
\]

Next, let   $y\in B(x,\a \b r) $. By  (\ref{it-bal}), (\ref{v-jump}),  and   (\ref{BVB}) (applied to $\b r$ and $r$), 
\[
  \ve_x^{B(x,\a \b r)^c}\le         \ve_x^{V(x,\a \tilde r)^c}\le   c_J \ve_y^{V(x,\tilde r)^c}\le c_J \vy^{B(x,r)^c} \on B(x,r)^c.
\]
Hence,  by (\ref{bal-tilde}),  
\[
\tilde \ve_x^{B(x,\a \b r)^c}=\frac{w}{w(x)} \,\ve_x^{B(x,\a \b r)^c}\le a\inv c_J\,\frac {w}{w(y)} \, \vy^{B(x,r)^c} 
= a\inv c_J\tilde \ve_y^{B(x,r)^c} \on B(x,r)^c.
\]

Thus, by Remark \ref{ess-local}, we conclude that there exist    constants $\tilde \a\in (0,1/4)$ and $\tilde K\ge 1$ such that, 
 for every function $\tilde h\ge 0$ which is harmonic on $U$ with respect to~$(X, \widetilde \W)$,
\[
\sup \tilde h(\ov B(x_0,\tilde a R))\le \tilde K\inf \tilde h(\ov B(x_0,\tilde \a R) ).
\]
Finally, if $h\in \H^+(U)$, then $h/w$ is harmonic on $U$ with respect to~$(X, \widetilde \W)$,
and thus
\[
\sup h(\ov B(x_0,\tilde \a R))\le a\inv \tilde K\inf   h(\ov B(x_0,\tilde \a R) ).
\]
Of course, we  obtain as well that $\tilde \H^+(U)=\H^+(U)$.
\end{proof}

\bibliographystyle{plain} 
\def\cprime{$'$} \def\cprime{$'$}

{\small \noindent 
Wolfhard Hansen,
Fakult\"at f\"ur Mathematik,
Universit\"at Bielefeld,
33501 Bielefeld, Germany, e-mail:
 hansen$@$math.uni-bielefeld.de}\\
{\small \noindent Ivan Netuka,
Charles University,
Faculty of Mathematics and Physics,
Mathematical Institute,
 Sokolovsk\'a 83,
 186 75 Praha 8, Czech Republic, email:
netuka@karlin.mff.cuni.cz}

\end{document}